    \newcommand{\be}{\begin{equation}}
    \newcommand{\ee}{\end{equation}}
    \newcommand{\nrm}[1]{\left\| #1 \right\|}
    \newcommand{\dx}{\mbox{d}\mathbf{x}}
    \newcommand{\dy}{\mbox{d}\mathbf{y}}
    \newcommand{\bx}{\mathbf{x}}
    \newcommand{\by}{\mathbf{y}}
    \def\g{\mbox{\boldmath $g$}}
    \newcommand\dt {{\Delta t}}
    \newcommand{\Cinv}{C_{\text{inv}}}
    \newcommand\Mh {\mathcal{M}_h}
    \renewcommand\i {\mathrm{i}}
     \def\0{\mbox{\boldmath $0$}}
    \def\g{\mbox{\boldmath $g$}}
\begin{document}

\ensubject{fdsfd}

\ArticleType{ARTICLES}
\Year{2022}
\Month{January}%
\Vol{65}
\No{1}
\BeginPage{1} %
\DOI{10.1007/s11425-016-5135-4}
\ReceiveDate{January 1, 2022}
\AcceptDate{January 1, 2022}
\OnlineDate{January 1, 2022}

\title[]{Double stabilizations and convergence analysis of a second-order linear numerical scheme for the nonlocal Cahn--Hilliard equation}
{}

\author[1]{Xiao Li}{{xiao1li@polyu.edu.hk}}
\author[1,$\ast$]{Zhonghua Qiao}{{zqiao@polyu.edu.hk}}
\author[2]{Cheng Wang}{cwang1@umassd.edu}

\AuthorMark{Li X}

\AuthorCitation{Li X, Qiao Z H, Wang C}

\address[1]{Department of Applied Mathematics, The Hong Kong Polytechnic University, Hung Hom, Kowloon, Hong Kong}
\address[2]{Department of Mathematics, The University of Massachusetts, North Dartmouth, MA {\rm 02747}, USA}

\abstract{In this paper, we study a second-order accurate and linear numerical scheme for the nonlocal Cahn--Hilliard equation. The scheme is established by combining a modified Crank--Nicolson approximation and the Adams--Bashforth extrapolation for the temporal discretization, and by applying the Fourier spectral collocation to the spatial discretization. In addition, two stabilization terms in different forms are added for the sake of the numerical stability. We conduct a complete convergence analysis by using the higher-order consistency estimate for the numerical scheme, combined with the rough error estimate and the refined estimate. By regarding the numerical solution as a small perturbation of the exact solution, we are able to justify the discrete $\ell^\infty$ bound of the numerical solution, as a result of the rough error estimate. Subsequently, the refined error estimate is derived to obtain the optimal rate of convergence, following the established $\ell^\infty$ bound of the numerical solution. Moreover, the energy stability is also rigorously proved with respect to a modified energy. The proposed scheme can be viewed as the generalization of the second-order scheme presented in an earlier work, and the energy stability estimate has greatly improved the corresponding result therein.}

\keywords{nonlocal Cahn--Hilliard equation, second-order stabilized scheme, high-order consistency analysis, rough and refined error estimate}

\MSC{35Q99, 65M12, 65M15, 65M70}

\maketitle


\section{Introduction}

In this paper, we study the nonlocal Cahn--Hilliard (NCH) equation~\cite{bates06a, bates09, bates05b, bates05a, bates06b, guan17a, guan14b, guan14a}
\begin{equation}
\label{equation-nCH}
\phi_t = \Delta ( \phi^3 - \phi + \varepsilon^2 {\cal L} \phi ),
\quad (\mathbf{x},t) \in \Omega \times (0,T],
\end{equation}
where $\Omega=\prod_{i=1}^d (-X_i,X_i)$ is a cuboid domain in $\mathbb{R}^d$ ($d=1,2,3$)
and $\phi=\phi(\mathbf{x},t)$ is the unknown function subject to periodic boundary condition on $\overline{\Omega}$. In the last term of the right-hand side, $\varepsilon>0$ is an interfacial parameter, and $\mathcal{L}$ is a nonlocal linear operator defined as
\begin{equation*}
\mathcal{L} \psi(\mathbf{x}) = \int_\Omega J(\mathbf{x}-\mathbf{y}) (\psi(\mathbf{x})-\psi(\mathbf{y})) \, \dy .
\end{equation*}
In more details, $J$ is a kernel function satisfying
\begin{itemize}
\item[(a)] $J(\bx)\ge 0$ for any $\bx\in\mathbb{R}^d$;
\item[(b)] $J$ is $\Omega$-periodic and even, that is, $J(-\bx)=J(\bx)$ for any $\bx\in\mathbb{R}^d$;
\item[(c)] $\frac{1}{2}\int_\Omega J(\bx)|\bx|^2 \,  \dx = 1$;
\item[(d)] $J$ is integrable on $\Omega$ and $\gamma_0:=\varepsilon^2 (J*1)-1>0$,
\end{itemize}
where $*$ stands for the periodic convolution~\cite{guan14a}
\[
(J * \psi) (\bx) = \int_\Omega J(\bx-\by) \psi(\by) \, \dy = \int_\Omega J(\by) \psi(\bx-\by) \, \dy.
\]
Using the condition (d), the nonlocal operator can also be rewritten as
\[
\mathcal{L} \psi = (J*1) \psi - J*\psi,
\]
and correspondingly, the NCH equation \eqref{equation-nCH} becomes
\begin{equation*}
\phi_t = \Delta ( \phi^3 + \gamma_0 \phi - \varepsilon^2 J * \phi  )
= \nabla \cdot ((3\phi^2+\gamma_0) \nabla \phi) - \varepsilon^2 \Delta J*\phi .
\end{equation*}
The positivity of $\gamma_0$ implies the diffusivity of the leading term $\nabla \cdot ((3\phi^2+\gamma_0) \nabla \phi)$, while the solution may perform some singular behavior without such a condition~\cite{bates05b,bates05a}.

Similar to the classic Cahn--Hilliard equation~\cite{cahn58}, the NCH equation \eqref{equation-nCH} can be viewed as the $H^{-1}$ gradient flow with respect to a free energy functional with nonlocal interaction effects. The energy functional reads as
\begin{equation}
\label{energy-nCH}
E (\phi) = \int_\Omega F(\phi) \, \dx + \frac{\varepsilon^2}{2} (\phi,\mathcal{L}\phi),
\end{equation}
where $F(\phi)=\frac{1}{4}(\phi^2-1)^2$ and $(\cdot,\cdot)$ denotes the standard $L^2$ inner product on $\Omega$. Due to energetic variational structure, the solution to the NCH equation decreases the energy \eqref{energy-nCH} in time, i.e., $\frac{d}{dt} E(\phi(t)) \le 0$. In addition, as a common property of $H^{-1}$ gradient flows, the mass conservation is obvious in the sense that $\frac{d}{dt}\int_\Omega \phi(\bx,t) \, \dx = 0$.

The NCH equation \eqref{equation-nCH} has attracted increasingly attention and been applied to a variety of areas, including material sciences, image processing, finance, etc. In material sciences, the NCH equation and a few other related formulations arise as the mesoscopic model of interacting particle systems and phase transitions~\cite{fife03,hornthrop01}. In the dynamic density functional theory~\cite{archer04a,archer04b}, the solution describes the mesoscopic particle density and the interaction kernel is the two-particle direct correlation function. In comparison with the classic Cahn--Hilliard equation, the NCH equation performs more flexibility to describe more types of physical processes and phenomena by appropriately choosing interaction kernel functions. At the theoretical level, the well-posedness of the NCH equation with an integrable kernel function and the Neumann or Dirichlet boundary condition was studied by Bates and Han~\cite{bates05b,bates05a}, and it was claimed in~\cite{guan14a} that the existence and uniqueness of the periodic solution to the NCH equation may be established by using similar techniques. We refer the readers to~\cite{du12a, fife03} for some reviews of nonlocal diffusion models and parabolic-like evolution equations. We also refer the readers to~\cite{AinsworthMao17,SongXuKa16,TangYuZh19} for some other different forms of the nonlocal Cahn--Hilliard equations. At the numerical level, some researches have been devoted to designing efficient algorithms for nonlocal diffusion equations~\cite{du12a}, the nonlocal Allen--Cahn equation (the $L^2$ gradient flow with respect to the energy \eqref{energy-nCH})~\cite{du19,du16}, and some other nonlocal models~\cite{bates06b}. For the NCH equation, one of the main difficulties comes from the existence of both the nonlocal term and the Laplacian of nonlinear terms. Due to the energetic variational structure of the model, the numerical algorithms inheriting the energy dissipation law are always highly desired. To this end, the nonlocal term and the nonlinear term need to be addressed carefully. Guan et al~\cite{guan17a,guan14b,guan14a} developed first- and second-order convex splitting schemes for the NCH equation and proved the energy stability and convergence. In particular, the nonlinear term was treated implicitly to guarantee the energy stability, under the framework of the convex splitting approach (see also~\cite{eyre98, LiQiZh16, QiaoZhTa11, shen12, wang10, wise09}). As a result, an iteration solver becomes inevitable in the numerical implementation, which comes from the nonlinearity of the schemes. In addition, the nonlocal term was set into the explicit part to contribute only the right-hand side of the nonlinear system, so that multiple evaluations could be avoided in the nonlinear iteration at each time step.

To further simplify the computation efforts, some linear numerical schemes have been developed for the NCH equation~\cite{du18, LiX21b, LiX21a}, by applying the stabilization technique~\cite{shen10, xu06} to preserve the energy stability. The first-order scheme~\cite{du18} followed the idea of the standard stabilized implicit-explicit method and a theoretical justification of the energy stability and convergence analysis was presented in~\cite{LiX21a}. Moreover, the second-order backward differentiation formula (BDF2) was applied to construct a second-order accurate stabilized linear scheme~\cite{LiX21b} with the explicit extrapolation adopted for the nonlinear term and concave expansive term. This BDF2 scheme was proved to be energy stable with respect to a modified energy, which is an $O (\dt)$ approximation of the original energy \eqref{energy-nCH} at the numerical level. The convergence analysis was also carried out via the induction argument.
We refer the readers to \cite{LiaoSoTaZh21,LiaoZh20} and the references therein for more applications of the BDF2 method and \cite{JuLiQi22,LiShen22,ShenXuYa19,YangZh20} for more linear schemes for some other gradient flow equations.

Other than the BDF2 approach, another second-order stabilized linear scheme, based on the modified Crank--Nicolson discretization, has been studied in the existing work~\cite{du18}. This modified Crank--Nicolson scheme takes the form of
\begin{align}
\frac{\phi^{n+1} - \phi^n}{\dt}
   & = \Delta_N \Bigl(   \frac32 (\phi^n)^3 - \frac12 (\phi^{n-1})^3
   - \Big(\frac32 \phi^n - \frac12 \phi^{n-1}\Big) \nonumber \\
& \quad
   + \varepsilon^2 {\cal L}_N \Big( \frac34 \phi^{n+1} + \frac14 \phi^{n-1} \Big)
   + A_0 ( \phi^{n+1} - 2 \phi^n + \phi^{n-1}) \Bigr) . \label{scheme-2nd-0}
\end{align}
A modified energy inequality has been established in~\cite{du18} as
\begin{equation}
\label{scheme-2nd-0-energy}
\tilde{E}_N (\phi^{n+1}, \phi^n) \le \tilde{E}_N (\phi^n, \phi^{n-1}) + \frac{4A_0}{3} \| \phi^{n+1} - \phi^n \|_2^2 ,
\end{equation}
if the stabilization constant $A_0$ satisfies
\begin{equation}
\label{condition-A-0}
A_0 \ge  \max \Big\{  \frac43 ( \| \phi^n \|_\infty^2 + \| \phi^{n-1} \|_\infty^2 ) - \frac83 ,
\frac23 ( \| \phi^{n+1} \|_\infty^2 + 2 \| \phi^n \|_\infty^2 ) \Big\} .
\end{equation}
The operators with subindex $N$, as well as the discrete norms, represent the corresponding spatially-discrete versions; the precise definitions will be given in the next section; the term $\tilde{E}_N (\phi^n, \phi^{n-1})$ is a modified energy defined by the original energy $E_N (\phi^n)$,  with a perturbation of order $O(\dt^2)$. However, we notice that the inequality \eqref{scheme-2nd-0-energy} is not a rigorous energy stability estimate, since it does not ensure a global-in-time bound of the energy functional due to the lack of a theoretical control of the increment term $\| \phi^{n+1} - \phi^n \|_2^2$, although it is formally expected to be of order $O(\dt^2)$. In addition, the $\ell^\infty$ norms of the numerical solutions at time steps $t_{n-1}$, $t_n$ and $t_{n+1}$ are involved on the right-hand side of \eqref{condition-A-0}. As a result, such a lower bound for the constant $A_0$ has not been justified at a theoretical level in~\cite{du18}.

The primary goal of this work is to present a complete analysis of the energy stability and convergence for the second-order stabilized linear scheme \eqref{scheme-2nd-0}.
In particular, we have to slightly modify the scheme \eqref{scheme-2nd-0} to ensure the theoretical properties.
In more details, an additional $O(\dt^2)$ stabilization term, in the form of $A_1 \dt \Delta_N (\phi^{n+1} - \phi^n)$ (a Douglas--Dupont regularization term), is added to the right-hand side. As a result, double stabilization terms are involved in the numerical scheme to facilitate the theoretical analysis. The double stabilization technique has been used to analyze classic Allen--Cahn and Cahn--Hilliard equations~\cite{WangL2018, WangL2018b, WangL2019, WangL2020}, where the lower bounds of the constants $A_0$ and $A_1$ depend on the $\ell^\infty$ bound of the unknown numerical solutions, which have not been theoretically determined. To justify the lower bounds of the constants $A_0$ and $A_1$, a direct analysis provided in~\cite{LiD2017, LiD2017b, LiD2016a} for the classic Cahn--Hilliard equation may hardly be extended to this numerical scheme, due to the lack of higher-order diffusion terms. Instead, we view the numerical solution as a perturbation of the exact solution to \eqref{equation-nCH}, and use the convergence estimate to obtain an $\ell^\infty$ bound of the numerical solution. In more details, a high-order consistency analysis is performed, so that the uniform $\ell^\infty$ bound of the numerical solution, as well as its discrete temporal derivative, can be theoretically justified. Moreover, one crucial difference with the standard error estimate is associated with the fact that, we have to adopt $(-\Delta_N)^{-1}(\hat{e}^{n+1}-\hat{e}^n)$ to test the error equation with respect to the numerical error function $\hat{e}^n$, instead of testing $(-\Delta_N)^{-1}\hat{e}^{n+1}$ as in the standard error estimate (where $(-\Delta_N)^{-1}$ is a spatial discrete operator to be defined in the next section). Therefore, the key point of the convergence analysis is to use the discrete temporal derivative of the error function as the test function, rather than the error function directly, which would provide a higher-order temporal truncation error to match the modified Crank--Nicolson discretization for the temporal derivative. As a result of the convergence estimate, we obtain a uniform $\ell^\infty$ bound of the numerical solution. This in turn recovers the \emph{a priori} assumption, and the lower bounds for both $A_0$ and $A_1$ become available at a theoretical level.

Although a BDF2 scheme has been recently investigated for the NCH equation in~\cite{LiX21b}, the numerical scheme proposed in this paper (the scheme \eqref{scheme-2nd-1} given later) still performs significantly in some aspects. First, the constraints of the stabilizing constants $A_0$ and $A_1$ for the energy stability are of order $O (M_0^2)$ (with $M_0$ the supremum norm of the exact solution, as well as its temporal derivative), in comparison with the order $O (M_0^4)$ for the BDF2 scheme.
In other words, the lower bounds required for $A_0$ and $A_1$ are expected to be smaller for the Crank--Nicolson scheme at a theoretical level. Second, as mentioned above, the modified energy defined for the BDF2 scheme possesses a deviation of order $O(\dt)$ away from the original energy functional. For the proposed Crank--Nicolson scheme, we will prove the energy stability with respect to a modified energy with a deviation of order $O(\dt^2)$ away from the original energy  functional. This fact implies that the modified energy dissipation law becomes closer to the original physical system, in comparison with the BDF2 scheme reported in~\cite{LiX21b}.

The rest of this paper is organized as follows. The second-order stabilized linear numerical scheme, obtained by modifying the existing algorithm \eqref{scheme-2nd-0}, is presented in the fully-discrete version in Section~\ref{sect_scheme}. Some spatial discretization notations are introduced.
In Section~\ref{sect_convergence}, we conduct the convergence analysis for the proposed scheme by the induction argument, including the higher-order consistency estimate, a rough error estimate, and a refined error estimate. In addition, the infinity-norm of the numerical solution is justified as a by-product of the convergence result. Subsequently, the energy stability of the proposed scheme is proved in Section~\ref{sect_stability}.
Some numerical experiments are conducted in Section~\ref{sect_numerical}
to verify the second-order temporal convergence rates and the energy dissipation property.
Finally, some concluding remarks are given in Section~\ref{sect_conclusion}.

\section{Second-order stabilized linear numerical scheme}
\label{sect_scheme}

In this section, we develop the fully-discrete second-order scheme for the NCH equation \eqref{equation-nCH}.
First, we summarize some notations for the 2-D Fourier spectral collocation method for the spatial discretization.
An extension to the 3-D case is straightforward.

For simplicity of notations, we consider the square domain $\Omega=(-X,X)^2$. For any given even number $N$, let $h=2X/N$ be the size of the uniform mesh, denoted by $\Omega_h$, composed of the nodes $(x_i,y_j)$ with $x_i=-X+ih$ and $y_j=-X+jh$ for $1\le i,j\le N$. The space of all $\Omega_h$-periodic grid functions is defined as
\[
\Mh = \{f:\mathbb{Z}^2\to\mathbb{R}\,|\,f_{i+pN,j+qN}=f_{ij} \text{ for $1\le i,j\le N$ and $p,q\in\mathbb{Z}$}\}.
\]
For any grid functions $f,g\in\Mh$,
the $\ell^2$ inner product, the $\ell^p$ norm ($1\le p<\infty$), and the $\ell^\infty$ norm
are defined respectively as
\[
\langle f,g \rangle = h^2 \sum_{i,j=1}^N f_{ij}g_{ij}, \quad
\|f\|_p = \langle |f|^p,1 \rangle^{\frac{1}{p}}, \quad
\|f\|_\infty = \max_{1\le i,j\le N} |f_{ij}|.
\]
In particular, the $\ell^2$ norm can also be expressed as $\|f\|_2=\sqrt{\langle f,f \rangle}$. A subspace of $\Mh$ collecting all grid functions with zero mean is denoted by
$\Mh^0 = \{f\in\Mh \,|\, \langle f,1 \rangle = 0\}$.

For $f\in\Mh$, we have the discrete Fourier expansion
\[
f_{ij} = \sum_{k,l=-N/2+1}^{N/2} \hat{f}_{kl} \exp\Big(\frac{\i\pi}{X} (kx_i+ly_j)\Big), \quad
\hat{f}_{kl} = \frac{1}{N^2} \sum_{i,j=1}^N f_{ij} \exp\Big(-\frac{\i\pi}{X} (kx_i+ly_j)\Big).
\]
The Fourier pseudo-spectral approximations to the first and second partial derivatives in the $x$-direction are defined as
\begin{align*}
D_x f_{ij} & = \sum_{k,l=-N/2+1}^{N/2} \frac{\i k\pi}{X}\hat{f}_{kl} \exp\Big(\frac{\i\pi}{X} (kx_i+ly_j)\Big), \\
D_x^2 f_{ij} & = \sum_{k,l=-N/2+1}^{N/2} \Big(-\frac{(k\pi)^2}{X^2}\Big)\hat{f}_{kl}
\exp\Big(\frac{\i\pi}{X} (kx_i+ly_j)\Big).
\end{align*}
The operators $D_y$ and $D_y^2$ in the $y$-direction can be defined in the similar way. For any $f\in\Mh$ and $\mathbf{f}=(f^1,f^2)^T\in\Mh\times\Mh$, the discrete gradient, divergence, and Laplace operators are defined respectively as
\[
\nabla_N f = \binom{D_x f}{D_y f}, \quad
\nabla_N \cdot \mathbf{f} = D_x f^1 + D_y f^2, \quad
\Delta_N f = D_x^2 f + D_y^2 f.
\]
For any $f,g\in\Mh$ and $\mathbf{g}\in\Mh\times\Mh$, we have the following summation-by-parts formulas~\cite{gottlieb12a, gottlieb12b, LiX21a}:
\[
\langle f, \nabla_N \cdot \mathbf{g} \rangle = - \langle \nabla_N f, \mathbf{g} \rangle, \quad
\langle f, \Delta_N g \rangle = - \langle \nabla_N f, \nabla_N g \rangle = \langle \Delta_N f, g \rangle.
\]
In addition, $-\Delta_N$ is self-adjoint and positive definite on $\Mh^0$, and thus $(-\Delta_N)^{-1}$ exists and also positive definite on $\Mh^0$. Moreover, for any $f,g\in\Mh^0$, we define the discrete $H^{-1}$ inner product and the discrete $H^{-1}$ norm as
\begin{align*}
\langle f, g \rangle_{-1,N} & = \langle f, (-\Delta_N)^{-1} g \rangle
= \langle (-\Delta_N)^{-\frac{1}{2}} f, (-\Delta_N)^{-\frac{1}{2}} g \rangle, \\
\|f\|_{-1,N} & = \sqrt{\langle f, f \rangle_{-1,N}} = \|(-\Delta_N)^{-\frac{1}{2}} f\|_2.
\end{align*}

To define the discrete version of the nonlocal operator $\mathcal{L}$, we need the discrete convolution notation. The following definition follows the similar notations in~\cite{guan14a,LiX21a}.
For any $f,\phi\in\Mh$, the discrete convolution $f * \phi \in \Mh$ is defined at a componentwise level:
\[
(f * \phi)_{ij} = h^2 \sum_{p,q=1}^N f_{i-p,j-q} \phi_{pq}, \quad 1 \le i,j \le N.
\]
In a recent work~\cite{LiX21a}, the following preliminary estimate has been established for the discrete convolution, which will be used in the later analysis.

\begin{lemma} [See~\cite{LiX21a}] \label{lem:1}
Suppose $\phi, \psi$ are two periodic grid functions.  Assume that $\mathsf{f} \in C_{\rm per}^1(\Omega)$ is even and define its grid restriction via $f_{ij} := \mathsf{f}(x_i,y_j)$. Then for any $\alpha > 0$, we have
	\begin{equation}
   \left| \langle f * \phi , \Delta_N \psi \rangle \right|  \le \alpha \| \phi \|^2_2 + \frac{C_\mathsf{f}}{\alpha} \| \nabla_N \psi \|^2_2  ,  \label{lem 1:0}
	\end{equation}
where $C_\mathsf{f}$ is a positive constant that depends on $\mathsf{f}$ but is independent of $h$.
\end{lemma}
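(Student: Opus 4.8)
The idea is to transfer one of the two spatial derivatives contained in $\Delta_N$ onto the smooth factor $f$: the derivative that remains then pairs with $\psi$ to produce $\nabla_N\psi$, while the derivative that lands on $f$ is absorbed by the regularity of $\mathsf{f}$. This is essentially forced by the structure of the target bound \qref{lem 1:0}, whose right-hand side involves $\|\phi\|_2$ and $\|\nabla_N\psi\|_2$ but only a single power of each, so the second-order operator $\Delta_N$ must be ``split'' between the two factors.

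I would begin from the summation-by-parts identity, which gives $\langle f * \phi, \Delta_N\psi\rangle = -\langle\nabla_N(f*\phi),\nabla_N\psi\rangle$. Since both the discrete convolution and $\nabla_N$ act diagonally in the discrete Fourier basis, they commute, and a short computation with the Fourier coefficients shows $\nabla_N(f*\phi)=(\nabla_N f)*\phi$, the frequency factor being attached to $\hat{f}_{kl}$ rather than to $\widehat{f*\phi}_{kl}$. The Cauchy--Schwarz inequality followed by the discrete Young convolution inequality then yields $|\langle f*\phi,\Delta_N\psi\rangle|\le\|(\nabla_N f)*\phi\|_2\,\|\nabla_N\psi\|_2\le\|\nabla_N f\|_1\,\|\phi\|_2\,\|\nabla_N\psi\|_2$, after which the weighted Young inequality $ab\le\alpha a^2+\frac{1}{4\alpha}b^2$ produces \qref{lem 1:0} with $C_{\mathsf{f}}=\frac14\|\nabla_N f\|_1^2$.

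The crux --- and the only place where the assumption $\mathsf{f}\in C^1_{\rm per}$ is used --- is to bound $\|\nabla_N f\|_1$ (equivalently $\sup_{k,l}|\xi_{kl}|\,|\hat{f}_{kl}|$, where $\xi_{kl}=(k\pi/X,l\pi/X)$) by a constant depending on $\mathsf{f}$ but not on $h=2X/N$. I would link the discrete coefficients $\hat{f}_{kl}$ to the continuous Fourier coefficients $\tilde{f}_{KL}$ of $\mathsf{f}$ through the aliasing relation $\hat{f}_{kl}=\sum_{m,n\in\mathbb{Z}}\tilde{f}_{k+mN,\,l+nN}$, and use the identity $\i\,\xi_{KL}\tilde{f}_{KL}=\widetilde{\nabla\mathsf{f}}_{KL}$ to trade each surviving factor of $\xi$ on $f$ for a Fourier coefficient of $\nabla\mathsf{f}$; this is legitimate because $\nabla\mathsf{f}\in C^0_{\rm per}\subset L^2$ has square-summable coefficients. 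A Cauchy--Schwarz estimate over the aliasing lattice then reduces matters to the elementary weight bound $\sum_{m}\bigl(k/(k+mN)\bigr)^2=\pi^2(k/N)^2/\sin^2(\pi k/N)\le\pi^2/4$ for $|k|\le N/2$, which is uniform in $N$; the multidimensional indices are handled analogously. The evenness of $\mathsf{f}$ is convenient throughout, making $\hat{f}_{kl}$ real and the convolution operator self-adjoint. Controlling this aliasing sum uniformly in $h$ is the genuinely delicate point; the remaining manipulations are routine.
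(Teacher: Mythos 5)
Your first two steps are sound and follow the route that the cited reference takes (note the present paper does not reprove the lemma; it quotes it from \cite{LiX21a}): summation by parts and the discrete convolution theorem give $\langle f*\phi,\Delta_N\psi\rangle=-\langle(\nabla_N f)*\phi,\nabla_N\psi\rangle$, and everything reduces to a bound, uniform in $h$, on a first-order quantity attached to $f$. The genuine gaps are both in your final paragraph. First, $\|\nabla_N f\|_1$ and $\sup_{k,l}|\xi_{kl}|\,|\hat f_{kl}|$ are \emph{not} equivalent: for any grid function $g$ one has $|\hat g_{kl}|\le|\Omega|^{-1}\|g\|_1$, so the Fourier sup is dominated by the $\ell^1$ norm, but the converse fails (the two can differ by a factor as large as $N$). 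Since your Young-inequality step $\|(\nabla_N f)*\phi\|_2\le\|\nabla_N f\|_1\,\|\phi\|_2$ consumes the $\ell^1$ norm, bounding the Fourier sup by aliasing does not justify it; indeed it is unclear whether $\|\nabla_N f\|_1$ is uniformly bounded at all for merely $C^1$ data in two dimensions. Second, the aliasing argument itself does not close in 2-D: for fixed $(k,l)$ the Cauchy--Schwarz weight $\sum_{m,n}\bigl(k/(k+mN)\bigr)^2$ diverges because the summand is independent of $n$ (your identity controls only the $m$-sum), and $\nabla\mathsf f\in L^2$ supplies no decay in $n$ to compensate; moreover, writing $\hat f_{kl}=\sum_{m,n}\tilde f_{k+mN,\,l+nN}$ presupposes absolutely convergent pointwise Fourier expansion at the grid points, which $C^1$ regularity does not guarantee in dimension two. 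So ``the multidimensional indices are handled analogously'' is precisely where the proof breaks.

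Both gaps are repairable, and the repair is the natural proof of this lemma. (i) Never leave Fourier space: by the discrete convolution theorem and Parseval, $|\langle f*\phi,\Delta_N\psi\rangle|\le|\Omega|^2\sup_{k,l}\bigl(|\xi_{kl}|\,|\hat f_{kl}|\bigr)\sum_{k,l}|\hat\phi_{kl}|\,|\xi_{kl}|\,|\hat\psi_{kl}|\le|\Omega|\sup_{k,l}\bigl(|\xi_{kl}|\,|\hat f_{kl}|\bigr)\,\|\phi\|_2\,\|\nabla_N\psi\|_2$, so only the Fourier sup is ever needed and the $\ell^1$ norm never enters. (ii) Bound that sup with no aliasing at all, by Abel (discrete) summation on the finite sum defining $\hat f_{kl}$: reindexing $i\mapsto i+1$ gives $|e^{2\pi\i k/N}-1|\,|\hat f_{kl}|\le\max_{i,j}|f_{i+1,j}-f_{ij}|\le h\|\partial_x\mathsf f\|_\infty$, while $|e^{2\pi\i k/N}-1|=2|\sin(\pi k/N)|\ge 4|k|/N$ for $|k|\le N/2$, hence $(|k|\pi/X)\,|\hat f_{kl}|\le\tfrac{\pi}{2}\|\partial_x\mathsf f\|_\infty$, and similarly in $l$. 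This is uniform in $h$, uses exactly the Lipschitz control that $\mathsf f\in C^1_{\rm per}$ provides, and the weighted Young inequality then yields \eqref{lem 1:0} with $C_{\mathsf f}$ proportional to $|\Omega|^2\|\nabla\mathsf f\|_\infty^2$; the evenness of $\mathsf f$ is not needed for the inequality itself.
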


Given a kernel function $J$ satisfying the conditions (a)--(d), the discrete version of the nonlocal operator can be defined as
\[
\mathcal{L}_N \phi = (J * 1) \phi - J * \phi, \quad \forall \, \phi \in \Mh.
\]

Finally, we present the second-order stabilized linear numerical scheme studied in this paper. Given $\dt$ a uniform time step size, we set $\{t_k=k\dt\}$ as the nodes in the time interval and denote by $\phi^k$ the numerical solution at time $t=t_k$. The fully-discrete scheme is proposed as follows: given $\phi^n,\phi^{n-1}\in\Mh^0$ ($n\ge1$), find $\phi^{n+1}\in\Mh^0$ such that
\begin{align}
\frac{\phi^{n+1} - \phi^n}{\dt}
   & = \Delta_N \Bigl(   \frac32 (\phi^n)^3 - \frac12 (\phi^{n-1})^3  - \breve{\phi}^{n+1/2}
   + A_0 ( \phi^{n+1} - 2 \phi^n + \phi^{n-1}) \nonumber
\\
  & \quad
   + A_1 \dt ( \phi^{n+1} - \phi^n )
   + \varepsilon^2 {\cal L}_N \Big( \frac34 \phi^{n+1} + \frac14 \phi^{n-1} \Big) \Bigr) , \label{scheme-2nd-1}
\end{align}
with $\breve{\phi}^{n+1/2} = \frac32 \phi^n - \frac12 \phi^{n-1}$. The case of $A_1 =0$ yields the algorithm~\eqref{scheme-2nd-0} studied in~\cite{du18}. In addition to $A_0 ( \phi^{n+1} - 2 \phi^n + \phi^{n-1})$, the term $A_1 \dt ( \phi^{n+1} - \phi^n )$ is another stabilization term, which stands for the Douglas--Dupont regularization. Therefore, double stabilizations have been involved in the proposed scheme. The later analysis will reveal that the stabilization term $A_1 \dt ( \phi^{n+1} - \phi^n )$ does not contribute to the convergence estimate, while it is crucial to the energy stability estimate.

In addition, since the proposed scheme~\eqref{scheme-2nd-1} is a two-step algorithm, we have to give some remarks on the initialization process to obtain the numerical solution $\phi^1$. A simple choice of single-step algorithms to generate $\phi^1$ is the first-order stabilized linear scheme proposed and studied in~\cite{du18,LiX21a}, in which a second-order temporal accuracy could be obtained in the first step (see~\cite{guo16, guo2021} for the related analysis for the classic Cahn--Hilliard equation). However, for the proposed scheme~\eqref{scheme-2nd-1}, a higher-order approximation at time $t=t_1$ is needed in the theoretical analysis. Therefore, a second-order accurate numerical algorithm is highly preferred in the first time step. For instance, the discrete gradient scheme~\cite{du91, mclachlan99} turns out to be a one-step second-order accurate and energy stable scheme, so it gives a third-order approximation at time $t=t_1$ if the exact initial data is imposed for $\phi^0$. While the discrete equations are inevitably nonlinear in this approach, the explicit second-order Runge--Kutta method can be another choice, with the desired accuracy but sacrificing the energy dissipation property.

\section{Convergence analysis}
\label{sect_convergence}

Denote by $\Phi$ the exact solution to \eqref{equation-nCH}.
The existence and uniqueness of a smooth periodic solution to the NCH equation \eqref{equation-nCH} with smooth periodic initial data may be established using techniques developed by Bates and Han in~\cite{bates05b, bates05a}, from which one can obtain
\begin{equation}
\label{regularity}
\nrm{\Phi}_{L^\infty(0,T;L^\infty)} 
+ \nrm{ \Phi_t}_{L^\infty(0,T;L^\infty)} \le C  ,
\end{equation}
for any $T >0$.

Define $\Phi_N (\, \cdot \, ,t) := {\cal P}_N \Phi (\, \cdot \, ,t)$, the (spatial) Fourier projection of the exact solution into ${\cal B}^K$, the space of trigonometric polynomials of degree up to and including $K:=N/2$.  The following projection approximation is standard: if $\Phi \in L^\infty(0,T;H^\ell_{\rm per}(\Omega))$ for some $\ell\in\mathbb{N}$, then
\begin{equation}
\| \Phi_N - \Phi \|_{L^\infty(0,T;H^m)}
   \le C h^{\ell-m} \| \Phi \|_{L^\infty(0,T;H^\ell)},  \quad \forall \ 0 \le m \le \ell .
	\label{projection-est-0}
\end{equation}
We denote $\Phi_N^k=\Phi_N(\, \cdot \, , t_k)$ and $\Phi^k=\Phi (\, \cdot \, , t_k)$ with $t_k = k\dt$, and $\phi_N^k := {\mathcal P}_h \Phi_N (\, \cdot \, , t_k)$ the values of $\Phi_N$ at discrete grid points at time $t_k$. Since $\Phi_N \in {\cal B}^K$ and $1 \in {\cal B}^K$, we have the mass conservative property at the discrete level, i.e.,
\begin{align*}
\overline{\phi_N^k} & = \frac{1}{|\Omega|}\int_\Omega \, \Phi_N ( \cdot, t_k) \, \dx
= \frac{1}{|\Omega|}\int_\Omega \, \Phi ( \cdot, t_k) \, \dx \nonumber \\
& = \frac{1}{|\Omega|}\int_\Omega \, \Phi ( \cdot, t_{k-1}) \, \dx
= \frac{1}{|\Omega|}\int_\Omega \, \Phi_N ( \cdot, t_{k-1}) \, \dx = \overline{\phi_N^{k-1}},
\quad \forall \, k \in \mathbb{N} .
\label{mass conserv-1}
\end{align*}
We use the mass conservative projection for the initial data:  $\phi^0 = {\mathcal P}_h \Phi_N (\, \cdot \, , t=0)$, that is, $\phi^0_{ij} := \Phi_N (x_i, y_j, t=0)$. Thus, the solution to the numerical scheme \eqref{scheme-2nd-1} is also mass conservative at the discrete level:
\begin{equation*}
\overline{\phi^k} = \overline{\phi^{k-1}} ,  \quad \forall \, k \in \mathbb{N} .
\label{mass conserv-2}
\end{equation*}
Of course, based on the regularity assumption~\eqref{regularity}, we have
\begin{equation*}
 \max_{1\le k \le N_t} \|\phi_N^k\|_\infty
  + \max_{1\le k \le N_t} \bigg\| \frac{\phi_N^k - \phi_N^{k-1}}{\dt} \bigg\|_\infty
  < C^* ,
\label{IPDE-corrected-solution-stabilities}
\end{equation*}
where $N_t:=\lfloor T/\dt \rfloor$ for any given $T>0$.

Because of the fact that $\phi_N^k$ and $\Phi_N^k$ are identical on the discrete grid points, we just use the notation $\Phi_N^k$ in the following discussions for simplicity of presentation. With initial data of sufficient regularity, we can assume that the exact solution has regularity as
		\begin{equation*}
			\Phi \in \mathcal{R} := H^4 (0,T; C_{\rm per}^0(\overline{\Omega})) \cap H^3 (0,T; C_{\rm per}^2(\overline{\Omega}))  \cap L^\infty (0,T; C_{\rm per}^{m+2}(\overline{\Omega}))  ,  \quad m \ge 3 . 			
			\label{assumption:regularity.1}
		\end{equation*}
		
\begin{theorem}
\label{thm:convergence}
Given $T>0$, suppose the periodic solution to the NCH equation \eqref{equation-nCH}, given by $\Phi(x,y,t)$ on $\Omega$ for $0<t\le T$, is sufficiently smooth.
Meanwhile, the following assumption is made for the constants $A_0$ and $A_1$:
\begin{equation}
  A_0 \ge \frac{3 M_0^2}{2}  ,  \quad \mbox{with} \, \, \,
  M_0 =  1 + C^*, \, \, \, C^* = \max_{1\le k \le N_t} ( \| \Phi_N^k \|_\infty
  + \| \partial_t \Phi_N^k \|_\infty ) , \quad A_1 \ge 0 . \label{condition-A-1}
\end{equation}
Then, provided that $\dt$ and $h$ are sufficiently small, under linear refinement path constraint $C_1 h \le \dt \le C_2 h$ with $C_1$ and $C_2$ any fixed constants, we have the following error estimate
\begin{equation}
\label{convergence-0}
\| \Phi_N^n - \phi^n \|_2 \le C (\dt^2 + h^m) ,
\end{equation}
for all positive integers $n$ such that $n\dt \le T$, where $C>0$ is independent of $h$ and $\dt$.
\end{theorem}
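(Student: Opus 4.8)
The plan is to establish \eqref{convergence-0} by induction on $n$, following the three-step architecture announced in the introduction: a higher-order consistency analysis, a rough error estimate that produces a uniform $\ell^\infty$ bound for $\phi^n$, and a refined error estimate that delivers the optimal order. Throughout, $\phi^n$ is viewed as a perturbation of the projected exact solution, and the recurring difficulty --- that the cubic nonlinearity cannot be controlled without an a priori $\ell^\infty$ bound on the numerical solution --- is resolved by bootstrapping that bound out of the error estimate itself, via the inverse inequality $\|f\|_\infty \le \Cinv h^{-d/2}\|f\|_2$.

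First I would construct a higher-order approximate profile. A Taylor expansion at $t_{n+1/2}$ confirms that $\Phi_N$ satisfies \eqref{scheme-2nd-1} only up to a local truncation error of order $O(\dt^2 + h^m)$. To gain the extra margin needed to close the $\ell^\infty$ bootstrapping robustly along the path $C_1 h \le \dt \le C_2 h$, I would add a temporal corrector and set $\hat\Phi = \Phi_N + \dt^2 \Phi^{(1)}$, where $\Phi^{(1)}$ solves an auxiliary linear problem whose source is the leading truncation term; this yields $\|\hat\Phi^k\|_\infty \le C^* + O(\dt^2)$ and a residual $\tau^{n+1/2}$ of order $O(\dt^3 + h^m)$. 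Since $\|\hat\Phi^k - \Phi_N^k\|_2 = O(\dt^2)$, the triangle inequality converts an optimal-order estimate for $\hat e^k := \hat\Phi^k - \phi^k$ into the stated bound for $\|\Phi_N^k - \phi^k\|_2$. The two-step nature of the scheme also requires a high-order start, which I would supply by generating $\phi^1$ with a one-step second-order method so that $\hat e^0 = 0$ and $\hat e^1$ is of the required order.

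The core of the argument is the error equation for $\hat e^n$, obtained by subtracting \eqref{scheme-2nd-1} written for $\hat\Phi$ (with residual $\tau$) from the scheme for $\phi$. As emphasized in the introduction, I would test this equation in the discrete $H^{-1}$ inner product against the discrete temporal derivative $\hat e^{n+1} - \hat e^n$, i.e. with $(-\Delta_N)^{-1}(\hat e^{n+1} - \hat e^n)$, rather than against $\hat e^{n+1}$. This choice produces $\frac{1}{\dt}\|\hat e^{n+1} - \hat e^n\|_{-1,N}^2$ on the left, extracts from the $A_0$-term a telescoping quantity that controls $\|\hat e^{n+1} - \hat e^n\|_2$, and renders both the Douglas--Dupont contribution $-A_1 \dt \|\hat e^{n+1}-\hat e^n\|_2^2$ and the positive semidefinite part of $\varepsilon^2 \mathcal{L}_N$ nonpositive, so that $A_1$ indeed plays no role here. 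Crucially, pairing the residual as $\langle \tau^{n+1/2}, (-\Delta_N)^{-1}(\hat e^{n+1}-\hat e^n)\rangle$ exploits the fact that, for the modified Crank--Nicolson discretization, the truncation error is naturally a discrete temporal difference of a smooth higher-order field; a summation-by-parts in time then contributes at the optimal order, matching the $O(\dt^2)$ consistency. In the rough stage I would run this estimate under the a priori hypothesis $\|\hat e^k\|_\infty \le 1$ for $k \le n$ --- which gives $\|\phi^k\|_\infty \le \|\hat\Phi^k\|_\infty + 1 \le M_0$ --- bound the nonlinear difference through the factorization $(\hat\Phi^k)^3 - (\phi^k)^3 = \hat e^k\big((\hat\Phi^k)^2 + \hat\Phi^k\phi^k + (\phi^k)^2\big)$ together with Lemma~\ref{lem:1} for the nonlocal term, and conclude a bound on $\|\hat e^{n+1}\|_2$ whose smallness (via the inverse inequality) recovers $\|\hat e^{n+1}\|_\infty \le 1$ with room to spare; this both closes the induction and delivers the promised $\ell^\infty$ bound on $\phi^n$. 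In the refined stage I would rerun the estimate with $\|\phi^k\|_\infty \le M_0$ now rigorous, so that the destabilizing cubic contribution $\frac32 M_0^2 \|\hat e^k\|_2$ is dominated exactly by the stabilization once $A_0 \ge \frac32 M_0^2$; a discrete Gronwall inequality, using $\sum_n \dt\|\tau^{n+1/2}\|_{-1,N}^2 = O((\dt^2+h^m)^2)$ and the high-order initialization, then yields the optimal estimate.

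The main obstacle, I expect, is the nonlinear closing in the complete absence of a genuine higher-order diffusion term: unlike the classic Cahn--Hilliard equation, \eqref{scheme-2nd-1} offers no $\|\nabla_N \hat e\|_2$ or $\|\Delta_N \hat e\|_2$ coercivity, so every bound on $\hat e$ must be wrung from the $H^{-1}$ structure, the positivity of $\mathcal{L}_N$, and the two stabilizations alone. Making the $A_0$-telescoping interact correctly with the order gained from the time-difference test function --- so that the cubic term is absorbed with the sharp threshold $A_0 \ge \frac32 M_0^2$, the residual sum stays at order $(\dt^2+h^m)^2$, and the controlled increments $\|\hat e^{n+1}-\hat e^n\|_2$ can be reassembled into a bound on $\|\hat e^{n+1}\|_2$ itself --- is the delicate point of the analysis, and it is precisely what justifies the a priori $\ell^\infty$ assumption and thereby renders the lower bound \eqref{condition-A-1} fully rigorous.
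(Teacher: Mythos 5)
Your overall architecture matches the paper's: the corrected profile $\hat\Phi = \Phi_N + \dt^2 \mathcal{P}_N\Phi^{(2)}_{\dt}$ with $O(\dt^3+h^m)$ consistency, testing the error equation against $(-\Delta_N)^{-1}(\hat e^{n+1}-\hat e^n)$, the cubic factorization with $\|\mathcal{C}^k\|_\infty\le 3M_0^2$, Lemma~\ref{lem:1} for the convolution term, and a rough/refined pair closed by discrete Gronwall. However, there is a genuine gap in your induction bookkeeping. You take as \emph{a priori} hypothesis only the qualitative bound $\|\hat e^k\|_\infty\le 1$ for $k\le n$ and claim the rough stage then yields a bound on $\|\hat e^{n+1}\|_2$ ``whose smallness'' recovers $\|\hat e^{n+1}\|_\infty\le 1$. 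This cannot work: the rough estimate is a one-step inequality of the form $\gamma_0\|\hat e^{n+1}\|_2^2 \lesssim \|\hat e^n\|_2^2+\|\hat e^{n-1}\|_2^2+\cdots$ with an $O(1)$ coefficient (coming from the crude bound $\frac92 M_0^4(9\|\hat e^n\|_2^2+\|\hat e^{n-1}\|_2^2)$ on the nonlinear term), so mere boundedness of the previous errors produces only $\|\hat e^{n+1}\|_2=O(1)$, and the inverse inequality then gives $\|\hat e^{n+1}\|_\infty=O(h^{-1})$, not $\le 1$. Nor can you Gronwall the rough inequality over $n\sim 1/\dt$ steps, since the per-step amplification is $1+O(1)$ rather than $1+O(\dt)$; this is precisely why the paper calls it ``rough.''

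The paper resolves this by making the induction hypothesis \emph{quantitative} in $\ell^2$, namely $\|\hat e^k\|_2\le \dt^{5/2}+h^{m-1/2}$ for $k=n,n-1$ (see \eqref{a priori-1}), and by interleaving the two estimates inside a single induction step: the rough estimate converts the $\ell^2$ hypothesis into the $\ell^\infty$ bounds \eqref{a priori-4-1}--\eqref{a priori-4} at the \emph{new} level $t_{n+1}$ (it yields \eqref{convergence-rough-3}, i.e. $\|\hat e^{n+1}\|_2\le C(\dt^{5/2}+h^{m-1/2})$ with a constant possibly exceeding $1$, hence not closing the induction but sufficient for the inverse inequality); only then can the refined estimate be run, because its telescoping quantities $I_{nl}^{n+1}$, $I_{nl,(2)}^{n+1}$ and the increments $\mathcal{C}^{n+1}-\mathcal{C}^n=O(\dt)$ involve $\phi^{n+1}$ itself. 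The refined Gronwall bound \eqref{convergence-9-5} then gives $\|\hat e^{n+1}\|_2\le C(\dt^3+h^m)\le \dt^{5/2}+h^{m-1/2}$, and it is this step --- not the rough one --- that closes the induction, see \eqref{convergence-9-6}. Your proposal of two separate sweeps (an $\ell^\infty$ induction closed by the rough stage, followed by a refined pass) is therefore circular as stated. Two further slips: the accumulated residual should be $\sum_n \dt\|\tau_2^{n+1}\|_{-1,N}^2=O((\dt^3+h^m)^2)$, not $O((\dt^2+h^m)^2)$ --- the weaker order would only give $\|\hat e\|_2=O(\dt^2+h^m)$, which is insufficient to control the discrete temporal derivative $\|(\phi^{k}-\phi^{k-1})/\dt\|_\infty$ that the refined estimate itself requires; and the truncation term is handled by a Cauchy--Schwarz absorption into $\frac{1}{4\dt}\|\hat e^{n+1}-\hat e^n\|_{-1,N}^2$, with no summation by parts in time needed.
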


The key point in the convergence proof is that, a higher-order consistency analysis is necessary to provide a higher-order truncation error, so that the desired $\ell^\infty$ bound of the numerical error can be recovered with the help of the inverse inequality. In fact, this approach has been adopted for the numerical analysis of a large family of nonlinear PDEs, see, e.g., \cite{baskaran13b, duan21a, duan20a, E95, guan17a, guan14a, LiuC2021, STWW03, WLJ04, WangL15}. With the higher-order truncation error established for the constructed approximation solution, we perform the stability estimates for the numerical error function. Meanwhile, it turns out to be impossible to obtain the expected results directly, due to the complicated nonlinear expansion. We have to divide this part into two steps. First, a rough estimate is performed to obtain the $\ell^\infty$ bound of the numerical solution, as well as its temporal derivative. Subsequently, a refined estimate is carried out to derive the desired result of convergence rate, based on the $\ell^\infty$ bound obtained by the rough estimate. In particular, instead of testing the error equation by $(-\Delta_N)^{-1}\hat{e}^{n+1}$, we adopt a test function in the form of $(-\Delta_N)^{-1}(\hat{e}^{n+1}-\hat{e}^n)$.

\subsection{Higher-order consistency analysis; asymptotic expansion}

With the Taylor expansion in time and the approximation estimate~\eqref{projection-est-0},
we know that the Fourier projection solution $\Phi_N$ solves the discrete equation
\begin{align*}
\frac{\Phi_N^{n+1} - \Phi_N^n}{\dt}
& = \Delta_N \Bigl( \frac32 (\Phi_N^n)^3 - \frac12 (\Phi_N^{n-1})^3 - \breve{\Phi}_N^{n+1/2}
+ A_0 (\Phi_N^{n+1} - 2 \Phi_N^n + \Phi_N^{n-1}) \nonumber
\\
  & \quad
   + A_1 \dt ( \Phi_N^{n+1} - \Phi_N^n )
   + \varepsilon^2 {\cal L}_N \Big( \frac34 \Phi_N^{n+1} + \frac14 \Phi_N^{n-1} \Big) \Bigr)
   + \tau_0^{n+1}  ,\label{truncation-error}
\end{align*}
where $\breve{\Phi}_N^{n+1/2} = \frac32 \Phi_N^n - \frac12 \Phi_N^{n-1}$
and $\tau_0^{n+1}$ is the truncation error determined by
\begin{align*}
\tau_0^{n+1} & = \bigg( \frac{\Phi_N^{n+1} - \Phi_N^n}{\dt} - \partial_t \Phi_N(t_{n+\frac12}) \bigg)
+ \varepsilon^2 \Delta \mathcal{L} \Big( \Phi_N(t_{n+\frac12}) - \frac34 \Phi_N^{n+1} - \frac14 \Phi_N^{n-1}\Big) \\
& \quad + \Delta \Big( \Phi_N(t_{n+\frac12})^3 - \frac32 (\Phi_N^n)^3 + \frac12 (\Phi_N^{n-1})^3\Big)
- \Delta \Big(\Phi_N(t_{n+\frac12}) - \frac32 \Phi_N^n + \frac12 \Phi_N^{n-1} \Big) \\
& \quad - \Delta_N \Bigl( A_0 (\Phi_N^{n+1} - 2 \Phi_N^n + \Phi_N^{n-1}) + A_1 \dt ( \Phi_N^{n+1} - \Phi_N^n )\Bigr) \\
& \quad 
+ \varepsilon^2 (\Delta \mathcal{L} - \Delta_N  {\cal L}_N) \Bigl( \frac34 \Phi_N^{n+1} + \frac14 \Phi_N^{n-1} \Bigr) 
+ \Delta \Big( \mathcal{P}_N(\Phi(t_{n+\frac12})^3) - \Phi_N(t_{n+\frac12})^3 \Big)\\
& \quad + (\Delta - \Delta_N) \Bigl( \frac32 (\Phi_N^n)^3 - \frac12 (\Phi_N^{n-1})^3 - \frac32 \Phi_N^n + \frac12 \Phi_N^{n-1}\Bigr).
\end{align*}
Note that we have assumed $\Phi \in \mathcal{R}$.
By the Taylor expansion with the integral remainder, one can easily conclude that the summation of the first three lines of the right-hand side of $\tau_0^{n+1}$ is bounded by $C \dt^2$;
by the Fourier spectral approximation, the rest terms has the bound $C h^m$.
In summary, we have $\| \tau_0^{n+1} \|_{-1,N} \le C (\dt^2 + h^m)$.
However, this local truncation error will not be enough to recover the $\ell^\infty$ bound of the numerical solution and its discrete temporal derivative, due to the second-order accuracy in time. To remedy this, we construct a supplementary field $\Phi^{(2)}_{\dt}$  and introduce the approximate solution
	\begin{equation}
\hat{\Phi} = \Phi_N + \dt^2 {\cal P}_N \Phi^{(2)}_{\dt}  .
	\label{consistency-1}
	\end{equation}
As a result of this construction, a higher $O (\dt^3 + h^m)$ consistency is satisfied with the given numerical scheme~\eqref{scheme-2nd-1}.  The constructed field $\Phi^{(2)}_{\dt}$ will be obtained using a perturbation expansion and depends only on the exact solution $\Phi$.

An application of the temporal discretization in the numerical scheme~\eqref{scheme-2nd-1} to the Fourier projection solution $\Phi_N$ indicates that
\begin{align}
\frac{\Phi_N^{n+1} - \Phi_N^n}{\dt}
& =  \Delta \Bigl( \frac32 (\Phi_N^n)^3 - \frac12 (\Phi_N^{n-1})^3 - \breve{\Phi}_N^{n+1/2}
+ A_0 (\Phi_N^{n+1} - 2 \Phi_N^n + \Phi_N^{n-1}) \nonumber
\\
  & \quad
   + A_1 \dt ( \Phi_N^{n+1} - \Phi_N^n )
   + \varepsilon^2 {\cal L} \Big( \frac34 \Phi_N^{n+1} + \frac14 \Phi_N^{n-1} \Big)  \Bigr)
   + \dt^2 \g^{(2)}(\cdot,t_{n+1/2}) + O (\dt^3)   , \label{consistency-2-1}
\end{align}
which comes from the Taylor expansion in time. In fact, the function $\g^{(2)}(\bx,t)$ is smooth enough and depends only on the higher-order derivatives of $\Phi_N$. In turn, the temporal correction function $\Phi^{(2)}_{\dt}$ is given by the solution of the following  linear differential equation
	\begin{eqnarray}
\partial_t \Phi^{(2)}_{\dt}  = \Delta  \Bigl(  3 (\Phi_N)^2 \Phi^{(2)}_{\dt} - \Phi^{(2)}_{\dt}
 + \varepsilon^2 {\cal L} \Phi^{(2)}_{\dt} \Bigr) - \g^{(2)}  .
	\label{consistency-2-2}
	\end{eqnarray}
In fact, the existence and uniqueness of the solution to \eqref{consistency-2-2} follows the standard argument for parabolic equations~\cite{Temam2001}, and this solution depends only on the profile $\Phi_N$ and is smooth enough. Similar to \eqref{consistency-2-1}, an application of the temporal discretization to $\Phi^{(2)}_{\dt}$ implies that
\begin{align}
  &
 \frac{ (\Phi^{(2)}_{\dt})^{n+1} - ( \Phi^{(2)}_{\dt} )^n }{\dt} \nonumber
\\
 & =
\Delta \Bigl( 3 \Big( \frac32 (\Phi_N^n)^2 (\Phi^{(2)}_{\dt})^n -  \frac12 (\Phi_N^{n-1})^2 (\Phi^{(2)}_{\dt})^{n-1} \Big)
  - (\breve{\Phi}^{(2)}_{\dt})^{n+1/2}  + \varepsilon^2 {\cal L} \Big( \frac34 ( \Phi^{(2)}_{\dt} )^{n+1}
   + \frac14 ( \Phi^{(2)}_{\dt} )^{n-1}\Big) \nonumber
\\
  & \quad
   + A_0 ( (\Phi^{(2)}_{\dt})^{n+1} - 2 (\Phi^{(2)}_{\dt})^n + (\Phi^{(2)}_{\dt})^{n-1} )
   + A_1 \dt ( (\Phi^{(2)}_{\dt})^{n+1} - (\Phi^{(2)}_{\dt})^n ) \Bigr)
    - \g^{(2)}(\cdot,t_{n+1/2}) + O (\dt^2) ,\label{consistency-2-3} 	
\end{align}
with $(\breve{\Phi}^{(2)}_{\dt})^{n+1/2} = \frac32 ( \Phi^{(2)}_{\dt})^{n} - \frac12 (\Phi^{(2)}_{\dt})^{n-1}$.  A combination of \eqref{consistency-2-1} and \eqref{consistency-2-3} results in the following higher-order consistency estimate:
\begin{align*} 	
 \frac{ \hat{\Phi}^{n+1} - \hat{\Phi}^n }{\dt} & =
\Delta \Bigl( \frac32 ( \hat{\Phi}^n  )^3 - \frac12 ( \hat{\Phi}^{n-1} )^3 - \breve{\hat{\Phi}}^{n+1/2}
+ A_0 ( \hat{\Phi}^{n+1} - 2 \hat{\Phi}^n + \hat{\Phi}^{n-1} ) \nonumber \\
  & \quad
   + A_1 \dt ( \hat{\Phi}^{n+1} - \hat{\Phi}^n )
   + \varepsilon^2 {\cal L} \Big( \frac34 \hat{\Phi}^{n+1}  + \frac14 \hat{\Phi}^{n-1} \Big) \Bigr)
    + O (\dt^3) ,
\end{align*}
with $\breve{\hat{\Phi}}^{n+1/2}  = \frac32 \hat{\Phi}^n -\frac12  \hat{\Phi}^{n-1}$, and we have made use of the following estimate
\begin{align*}
    ( \hat{\Phi}^k ) ^3 & = \big( \Phi_N^k + \dt^2 {\cal P}_N( \Phi^{(2)}_{\dt})^k \big)^3 \nonumber \\
  & = ( \Phi_N^k )^3  + 3 \dt^2 ( \Phi_N^k )^2 {\cal P}_N( \Phi^{(2)}_{\dt} )^k + O (\dt^4 + h^m) \nonumber \\
  & = ( \Phi_N^k )^3  + 3 \dt^2 {\cal P}_N \big(( \Phi_N^k )^2 {\cal P}_N( \Phi^{(2)}_{\dt} )^k\big) + O (\dt^4 + h^m), \quad
  k = n, n-1 .
\end{align*}

Moreover, with an application of Fourier pseudo-spectral approximation in space, we obtain the $O (\dt^3 + h^m)$ truncation error estimate for the constructed solution $\hat{\Phi}$:
\begin{align}
 \frac{ \hat{\Phi}^{n+1} - \hat{\Phi}^n }{\dt}
&=
\Delta_N \Bigl( \frac32 ( \hat{\Phi}^n  )^3 - \frac12 ( \hat{\Phi}^{n-1} )^3
 - \breve{\hat{\Phi}}^{n+1/2}
+ A_0 ( \hat{\Phi}^{n+1} - 2 \hat{\Phi}^n + \hat{\Phi}^{n-1} )   \nonumber
\\
  & \quad
  + A_1 \dt ( \hat{\Phi}^{n+1} - \hat{\Phi}^n )
 + \varepsilon^2 {\cal L}_N \Big( \frac34 \hat{\Phi}^{n+1}
 + \frac14 \hat{\Phi}^{n+1} \Big)  \Bigr)
   + \tau_2^{n+1}    \label{consistency-3-2}
\end{align}
with $\| \tau_2^{n+1} \|_{-1,N} \le C (\dt^3 + h^m) $.
   	
Again, the purpose of the higher-order expansion \eqref{consistency-1} is to obtain an $\ell^{\infty}$ bound of  the error function, as well as its temporal derivative, via its $\ell^2$ norm in higher-order accuracy by utilizing an inverse inequality in the spatial discretization. The details will be demonstrated in the later sections. Under the linear refinement constraint $C_1 h \le \dt \le C_2 h$, a careful analysis reveals that
\begin{equation*}
\|  \hat{\Phi} - \Phi_N \|_\infty \le C ( \dt^2 + h^m )  ,
\end{equation*}
because of the Fourier projection estimate \eqref{projection-est-0} and the fact that $\| \Phi^{(2)}_{\dt} \|_\infty \le C$.
Then, if $\dt$ and $h$ are sufficiently small, in particular,
$C_1h\le\dt\le\min\{(4C(1+C_1^{-m}))^{-1},1\}$,
the following bounds are valid:
\begin{align}
	  &
\|  \hat{\Phi} - \Phi_N \|_\infty \le C ( \dt^2 + h^m )  \le \frac14 ,  \, \, \,  \mbox{so that} \, \,
\|  \hat{\Phi} \|_\infty \le \| \Phi_N \|_\infty + \| \hat{\Phi} - \Phi_N \|_\infty
\le C^* + \frac14 < M_0,  \label{consistency-4-2}
\\
  	  &
\Big\|  \frac{\hat{\Phi}^k - \hat{\Phi}^{k-1} }{\dt} - \frac{ \Phi_N^k - \Phi_N^{k-1} }{\dt} \Big\|_\infty
\le \frac{2C ( \dt^2 + h^m )}{\dt}  \le \frac12 ,  \, \, \,  \mbox{so that} \, \,
\Big\| \frac{\hat{\Phi}^k - \hat{\Phi}^{k-1} }{\dt} \Big\|_\infty \le C^* + \frac12  < M_0 .
  \label{consistency-4-3}
\end{align}

\subsection{A rough error estimate}

Instead of a direct comparison between the numerical solution and the Fourier projection $\Phi_N$ of the exact solution, we analyze the error between the numerical solution and the constructed solution to obtain a higher-order convergence in the $\ell^2$ norm. The following error function is introduced:
	\begin{equation*}
\hat{e}^k := \hat{\Phi}^k - \phi^k .
	\label{error function-2}
	\end{equation*}
Subtracting~\eqref{scheme-2nd-1} from~\eqref{consistency-3-2} gives
	\begin{align}
 \frac{ \hat{e}^{n+1} - \hat{e}^n }{\dt}  &=
\Delta_N \Bigl(  \frac32 ( ( \hat{\Phi}^n  )^3 - (\phi^n)^3 )
   - \frac12 ( ( \hat{\Phi}^{n-1}  )^3 - (\phi^{n-1})^3 )
  - \breve{\hat{e}}^{n+1/2}
+ A_0 ( \hat{e}^{n+1} - 2 \hat{e}^n + \hat{e}^{n-1} )  \nonumber
\\
  & \quad  + A_1 \dt (\hat{e}^{n+1} - \hat{e}^n)
 + \varepsilon^2 {\cal L}_N \Big( \frac34 \hat{e}^{n+1}  + \frac14 \hat{e}^{n-1} \Big)  \Bigr) 
   + \tau_2^{n+1}
   \label{convergence-1}
	\end{align}
with $\breve{\hat{e}}^{n+1/2} := \frac32 \hat{e}^n - \frac12 \hat{e}^{n-1}$. To carry out the nonlinear error estimate, we have to make an $\ell^2$ assumption for the numerical error function at the previous time steps $t_n$, $t_{n-1}$:
\begin{equation}
  \| \hat{e}^k \|_2 \le \dt^{\frac{5}{2}} + h^{m-\frac12}  , \quad  k= n , n-1 . 
   \label{a priori-1}
\end{equation}
Since $C_1 h \le \dt \le C_2 h$, an application of the inverse inequality reveals that
\begin{equation}
  \| \hat{e}^k \|_\infty \le \frac{ \Cinv \| \hat{e}^k \|_2 }{h}
  \le \Cinv'( \dt^{\frac{3}{2}} + h^{m-\frac32} ) , \quad k= n , n-1 ,  \label{a priori-2}
\end{equation}
where $\Cinv$ is the constant in the inverse inequality and $\Cinv'$ depends on $\Cinv$ and $C_2$. Therefore, if $\dt$ and $h$ are sufficiently small, in particular, $C_1h\le\dt\le\min\{(4\Cinv'(1+C_1^{3/2-m}))^{-2},M_0^{-1}\}$,
the $\ell^\infty$ bounds for the numerical solutions at $t_n$ and $t_{n-1}$, as well as their discrete temporal derivatives, become available (for $k=n, n-1$):
\begin{align}
  \| \phi^k \|_\infty &= \| \hat{\Phi}^k - \hat{e}^k \|_\infty
  \le \| \hat{\Phi}^k \|_\infty + \| \hat{e}^k \|_\infty
  \le C^* + \frac14 + \frac14 < M_0 ,   \label{a priori-3-1}
\\
  \Big\| \frac{ \phi^k - \phi^{k-1} }{\dt} \Big\|_\infty   &\le
  \Big\|  \frac{\hat{\Phi}^k - \hat{\Phi}^{k-1} }{\dt}  \Big\|_\infty
  + \Big\|  \frac{ \hat{e}^k - \hat{e}^{k-1} }{\dt}  \Big\|_\infty  \nonumber
\\
  &  \le C^* + \frac12 +\frac{2\Cinv'( \dt^{\frac{3}{2}} + h^{m-\frac32} )}{\dt}
\le  C^* + \frac12 + \frac12 = M_0 ,   \label{a priori-3-2}
\\
  \| \breve{\phi}^{k+1/2} \|_\infty &= \| \phi^k \|_\infty + \frac12 \| \phi^k - \phi^{k-1} \|_\infty
  \le C^* + \frac12 + \frac12 M_0 \dt \le C^* + 1 = M_0 ,  \label{a priori-3-3}
\end{align}
in which the estimates \eqref{consistency-4-2} and \eqref{consistency-4-3}  for $\| \hat{\Phi}^k \|_\infty$ have been recalled. Also, a careful estimate
\[
\dt^{\frac{3}{2}} + h^{m-\frac32}
\le \dt^{\frac{3}{2}} + C_1^{\frac32-m} \dt^{m-\frac32}
= (1 + C_1^{\frac32-m} \dt^{m-3}) \cdot \dt^{\frac12} \cdot \dt
\le \frac{1}{4} \dt
\]
is taken in the derivation of \eqref{a priori-3-2}, where the condition $m\ge3$ is used.
The \emph{a priori} assumption \eqref{a priori-1} will be recovered in the convergence estimate presented later.

Since $\overline{\hat{e}^k}=0$ for any $k \ge 0$, $(-\Delta_N)^{-1} \hat{e}^k$ is well-defined. Taking a discrete inner product with \eqref{convergence-1} by $(-\Delta_N)^{-1} ( \hat{e}^{n+1} - \hat{e}^n)$ leads to
\begin{align}
	&
  \frac{1}{\dt} \| \hat{e}^{n+1} - \hat{e}^n  \|_{-1,N}^2
   + A_0 \langle \hat{e}^{n+1} - 2 \hat{e}^n + \hat{e}^{n-1} ,
 \hat{e}^{n+1} - \hat{e}^n \rangle  + A_1 \dt \| \hat{e}^{n+1} - \hat{e}^n \|_2^2 \nonumber
\\
  & \qquad =
    - \Big\langle  \frac32 ( ( \hat{\Phi}^n  )^3 - (\phi^n)^3 )
   - \frac12 ( ( \hat{\Phi}^{n-1}  )^3 - (\phi^{n-1})^3 )   ,
     \hat{e}^{n+1} - \hat{e}^n \Big\rangle
  +  \langle \breve{\hat{e}}^{n+1/2} , \hat{e}^{n+1} - \hat{e}^n \rangle  \nonumber
\\
  & \qquad\quad
   -  \varepsilon^2 \Big\langle {\cal L}_N \Big( \frac34 \hat{e}^{n+1} + \frac14 \hat{e}^{n-1} \Big) ,
     \hat{e}^{n+1} - \hat{e}^n \Big\rangle
  +  \langle (-\Delta_N)^{-1} ( \hat{e}^{n+1} - \hat{e}^n ) ,
   \tau_2^{n+1} \rangle .
  \label{convergence-2}
\end{align}
For the artificial regularization term on the left-hand side, the following identity is obvious:
\begin{equation}
   \langle \hat{e}^{n+1} - 2 \hat{e}^n + \hat{e}^{n-1} ,
 \hat{e}^{n+1} - \hat{e}^n \rangle
 =  \frac12 ( \| \hat{e}^{n+1} - \hat{e}^n \|_2^2 - \| \hat{e}^n - \hat{e}^{n-1} \|_2^2
  + \| \hat{e}^{n+1} - 2 \hat{e}^n + \hat{e}^{n-1} \|_2^2 ) .   \label{convergence-3}
\end{equation}
The right-hand side term associated with the truncation error can be bounded by
\begin{align*}
    \langle (-\Delta_N)^{-1} (\hat{e}^{n+1} - \hat{e}^n) , \tau_2^{n+1} \rangle
  &\le \| \hat{e}^{n+1} - \hat{e}^n \|_{-1,N} \cdot \| \tau_2^{n+1} \|_{-1,N}  \nonumber
\\
  &\le
  \frac{1}{4 \dt} \| \hat{e}^{n+1} - \hat{e}^n \|_{-1,N}^2
  + \dt \| \tau_2^{n+1} \|_{-1,N}^2  .  
\end{align*}
For the second linear term on the right-hand side, a direct calculation gives
\begin{align}
   \langle \breve{\hat{e}}^{n+1/2} , \hat{e}^{n+1} - \hat{e}^n \rangle
  &= \frac12 \langle \hat{e}^{n+1} + \hat{e}^n  , \hat{e}^{n+1} - \hat{e}^n \rangle
    - \frac12 \langle \hat{e}^{n+1} - 2 \hat{e}^n + \hat{e}^{n-1} ,
    \hat{e}^{n+1} - \hat{e}^n \rangle    \nonumber
\\
  &=
   \frac12 ( \| \hat{e}^{n+1} \|_2^2 - \| \hat{e}^n \|_2^2  )
      - \frac14 ( \| \hat{e}^{n+1} - \hat{e}^n \|_2^2 - \| \hat{e}^n - \hat{e}^{n-1} \|_2^2
  + \| \hat{e}^{n+1} - 2 \hat{e}^n + \hat{e}^{n-1} \|_2^2 )  ,
  \label{convergence-5}
\end{align}
in which the first step comes from the fact that $\breve{\hat{e}}^{n+1/2} = \frac12 (\hat{e}^{n+1} + \hat{e}^n ) - \frac12 ( \hat{e}^{n+1} - 2 \hat{e}^n + \hat{e}^{n-1})$. The nonlocal linear term on the right-hand side can be rewritten as
\begin{align}
  & \quad~
  - \varepsilon^2 \Big\langle {\cal L}_N \Big( \frac34 \hat{e}^{n+1} + \frac14 \hat{e}^{n-1} \Big) ,
  \hat{e}^{n+1} - \hat{e}^n \Big\rangle  \nonumber
\\
  &= - \varepsilon^2 \Big\langle (J * 1) \Big( \frac34 \hat{e}^{n+1} + \frac14 \hat{e}^{n-1} \Big)
  - J * \Big( \frac34 \hat{e}^{n+1} + \frac14 \hat{e}^{n+1} \Big)  ,
   \hat{e}^{n+1} - \hat{e}^n \Big\rangle   \nonumber
\\
  &=
    - \varepsilon^2 (J * 1) \Big\langle  \frac34 \hat{e}^{n+1} + \frac14 \hat{e}^{n-1} ,
   \hat{e}^{n+1} - \hat{e}^n \Big\rangle
   + \varepsilon^2 \Big\langle J * \Big( \frac34 \hat{e}^{n+1} + \frac14 \hat{e}^{n-1} \Big) ,
    \hat{e}^{n+1} - \hat{e}^n \Big\rangle  .  \label{convergence-6-1}
\end{align}
For the first term appearing in the expansion \eqref{convergence-6-1}, the following identity is available:
\begin{align*}
   \Big\langle  \frac34 \hat{e}^{n+1} + \frac14 \hat{e}^{n-1}  ,
   \hat{e}^{n+1} - \hat{e}^n \Big\rangle
   & = \frac12 ( \| \hat{e}^{n+1} \|_2^2 - \| \hat{e}^n \|_2^2  )
   + \frac18 ( \| \hat{e}^{n+1} - \hat{e}^n \|_2^2
   - \| \hat{e}^n - \hat{e}^{n-1} \|_2^2 ) \nonumber
\\
  & \quad
  + \frac18 \| \hat{e}^{n+1} - 2 \hat{e}^n + \hat{e}^{n-1} \|_2^2   .
\end{align*}
Meanwhile, for the term $\varepsilon^2 \langle J * ( \frac34 \hat{e}^{n+1} + \frac14 \hat{e}^{n-1} ) ,  \hat{e}^{n+1} - \hat{e}^n \rangle$, we apply~\eqref{lem 1:0} in Lemma~\ref{lem:1} and obtain
\begin{align}
  & \quad~
   \varepsilon^2 \Big\langle J * \Big( \frac34 \hat{e}^{n+1} + \frac14 \hat{e}^{n-1} \Big)  ,
   \hat{e}^{n+1} - \hat{e}^n \Big\rangle  \nonumber
\\
   &= - \varepsilon^2 \Big\langle J * \Big( \frac34 \hat{e}^{n+1} + \frac14 \hat{e}^{n-1} \Big)   ,
   \Delta_N ( (-\Delta_N)^{-1} ( \hat{e}^{n+1} - \hat{e}^n ) ) \Big\rangle  \nonumber
\\
  &\le
    C_3 \dt \Big\| \frac34 \hat{e}^{n+1} + \frac14 \hat{e}^{n-1} \Big\|^2_2 + \frac{1}{4 \dt}
   \| \nabla_N (-\Delta_N)^{-1} ( \hat{e}^{n+1} - \hat{e}^n ) \|^2_2  \nonumber
\\
  &\le
   \frac{C_3}{8} \dt ( 9 \| \hat{e}^{n+1} \|^2_2 +  \| \hat{e}^{n-1} \|_2^2 )
   + \frac{1}{4 \dt} \| \hat{e}^{n+1} - \hat{e}^n \|_{-1,N}^2   ,
   \label{convergence-6-3}
\end{align}
with $C_3$ depending only on $J$ and $\varepsilon$. Subsequently, a combination of~\eqref{convergence-6-1}--\eqref{convergence-6-3} yields
\begin{align}
  & \quad~
  - \varepsilon^2 \Big\langle {\cal L}_N \Big( \frac34 \hat{e}^{n+1} + \frac14 \hat{e}^{n-1} \Big) ,
  \hat{e}^{n+1} - \hat{e}^n \Big\rangle  \nonumber
\\
  &\le -\frac{\varepsilon^2}{2} (J * 1) ( \| \hat{e}^{n+1} \|_2^2 - \| \hat{e}^n \|_2^2  )
   - \frac{\varepsilon^2}{8} (J * 1) ( \| \hat{e}^{n+1} - \hat{e}^n \|_2^2
   - \| \hat{e}^n - \hat{e}^{n-1} \|_2^2 )  \nonumber
\\
  &\quad
  - \frac{\varepsilon^2}{8} (J * 1)
   \| \hat{e}^{n+1} - 2 \hat{e}^n + \hat{e}^{n-1} \|_2^2  )
    + \frac{C_3}{8} \dt ( 9 \| \hat{e}^{n+1} \|^2_2 +  \| \hat{e}^{n-1} \|_2^2 )
    + \frac{1}{4 \dt}  \| \hat{e}^{n+1} - \hat{e}^n \|_{-1,N}^2  .  \label{convergence-6-4}
\end{align}
For the nonlinear inner product on the right-hand side of \eqref{convergence-2},
we begin with the following nonlinear expansion:
\begin{equation*}
 ( \hat{\Phi}^k  )^3 - ( \phi^k  )^3
 = (  ( \hat{\Phi}^k )^2
    + \hat{\Phi}^k \phi^k  + ( \phi^k  )^2 ) \hat{e}^k , \quad
    k= n , n-1 . 
\end{equation*}
Denote ${\cal C}^k := ( \hat{\Phi}^k )^2 + \hat{\Phi}^k \phi^k  + ( \phi^k  )^2$. The consistency estimate \eqref{consistency-4-2} and \emph{a priori} estimate \eqref{a priori-3-1} indicate
\begin{equation}
  \|  {\cal C}^k  \|_\infty  \le 3  M_0^2 ,  \quad k=n, n-1 .
      \label{convergence-7-3}
\end{equation}
Then we arrive at
\begin{equation*}
 \| ( \hat{\Phi}^k  )^3 - ( \phi^k  )^3  \|_2
 \le \|  {\cal C}^k  \|_\infty \cdot \|  \hat{e}^k  \|_2
 \le 3 M_0^2 \|  \hat{e}^k \|_2  ,  \quad  k=n, n-1 .   \label{convergence-7-4}
\end{equation*}
As a consequence, the following rough estimate can be derived:
\begin{align}
  & \quad~
   - \Big\langle \frac32 ( ( \hat{\Phi}^n  )^3 - (\phi^n)^3 )
   - \frac12 ( ( \hat{\Phi}^{n-1}  )^3 - (\phi^{n-1})^3 ) ,
     \hat{e}^{n+1} - \hat{e}^n \Big\rangle  \nonumber
\\
  &\le
    \Big(  \frac32 \| ( \hat{\Phi}^n  )^3 - (\phi^n)^3  \|_2
   + \frac12 \| ( \hat{\Phi}^{n-1}  )^3 - (\phi^{n-1})^3 \|_2  \Big)
     \cdot   \| \hat{e}^{n+1} - \hat{e}^n \|_2  \nonumber
\\
  &\le
        3 M_0^2  \Big( \frac32 \| \hat{e}^n \|_2 + \frac12  \| \hat{e}^{n-1} \|_2\Big)
     \cdot   \| \hat{e}^{n+1} - \hat{e}^n \|_2    \nonumber
\\
  &\le
      \frac92 M_0^4 ( 9 \|  \hat{e}^n \|_2^2 + \| \hat{e}^{n-1} \|_2^2 )
     + \frac{1}{4}    \| \hat{e}^{n+1} - \hat{e}^n \|_2^2 .
  \label{convergence-7-5}
\end{align}

Therefore, a substitution of \eqref{convergence-3}--\eqref{convergence-5}, \eqref{convergence-6-4} and \eqref{convergence-7-5} into \eqref{convergence-2} leads to
\begin{align*}
	& \quad~
     \frac{1}{2 \dt} \| \hat{e}^{n+1} - \hat{e}^n \|_{-1,N}^2
        + \Big(\frac{A_0}{2}+\frac{1}{4}+\frac{\varepsilon^2}{8}(J*1)\Big) ( \| \hat{e}^{n+1} - \hat{e}^n \|_2^2
        - \| \hat{e}^n - \hat{e}^{n-1} \|_2^2 + \| \hat{e}^{n+1} - 2 \hat{e}^n + \hat{e}^{n-1} \|_2^2)
      \nonumber
\\
  &\quad~
   + A_1 \dt  \| \hat{e}^{n+1} - \hat{e}^n \|_2^2
   + \frac12 ( \varepsilon^2 (J * 1) - 1 ) ( \| \hat{e}^{n+1} \|_2^2 - \| \hat{e}^n \|_2^2  )
   - \frac{1}{4}    \| \hat{e}^{n+1} - \hat{e}^n \|_2\nonumber
\\
  &\le
    \frac{C_3}{8} \dt ( 9 \| \hat{e}^{n+1} \|^2_2 +  \| \hat{e}^{n-1} \|_2^2 )
    + \frac92 M_0^4 ( 9 \|  \hat{e}^n \|_2^2 + \| \hat{e}^{n-1} \|_2^2 )
    + \dt \| \tau_2^{n+1} \|_{-1,N}^2 .
\end{align*}
Making use of the condition (d) for the kernel, we see that
\begin{align}
    \frac{\gamma_0}{2} \| \hat{e}^{n+1} \|_2^2
  &\le
     \Big(\frac{A_0}{2}+\frac{1}{4}+\frac{\varepsilon^2}{8}(J*1)\Big) \| \hat{e}^n - \hat{e}^{n-1} \|_2^2
    + \frac{\gamma_0}{2} \| \hat{e}^n \|_2^2
    + \frac{C_3}{8} \dt ( 9 \| \hat{e}^{n+1} \|^2_2 +  \| \hat{e}^{n-1} \|_2^2 )    \nonumber
\\
  & \quad
    + \frac92 M_0^4 ( 9 \|  \hat{e}^n \|_2^2 + \| \hat{e}^{n-1} \|_2^2 )
    + \dt \| \tau_2^{n+1} \|_{-1,N}^2 .
  \label{convergence-rough-2}
\end{align}
Meanwhile, with the application of the \emph{a priori} error estimate \eqref{a priori-1}, we get
	\begin{eqnarray}
    \frac{\gamma_0}{4} \| \hat{e}^{n+1} \|_2^2
   \le C_4 ( \dt^5 + h^{2 m - 1} ) ,
  \label{convergence-rough-3}
	\end{eqnarray}
provided that $\dt\le\min\{2\gamma_0(9C_3)^{-1},1\}$ and $h\le1$. Then, an application of 2-D inverse inequality gives
	\begin{equation*}
  \| \hat{e}^{n+1} \|_\infty  \le \frac{\Cinv \| \hat{e}^{n+1} \|_2 }{h}
   \le \hat {C}_1 ( \dt^{\frac{3}{2}} + h^{m - \frac32} ),
\quad \mbox{with } \hat{C}_1 := \Cinv(C_2+1)\sqrt{\frac{4C_4}{\gamma_0}},
	\label{convergence-rough-4}
	\end{equation*}
under the linear refinement constraint $C_1 h \le \dt \le C_2 h$. Consequently,
if $\dt$ and $h$ are sufficiently small, in particular,
$C_1h\le\dt\le\min\{(4\hat{C}_1(1+C_1^{3/2-m}))^{-2},1\}$,
the following \emph{a priori} bounds are valid:
\begin{align}
   \| \phi^{n+1} \|_\infty &\le
    \| \hat{\Phi}^{n+1} \|_\infty + \| \hat{e}^{n+1} \|_\infty
  \le C^* + \frac14 + \frac14 < M_0  ,   \label{a priori-4-1}
\\
   \Big\| \frac{ \phi^{n+1} - \phi^n }{\dt} \Big\|_\infty   &\le
  \Big\|  \frac{\hat{\Phi}^{n+1} - \hat{\Phi}^n }{\dt}  \Big\|_\infty
  + \Big\|  \frac{ \hat{e}^{n+1} - \hat{e}^n }{\dt}  \Big\|_\infty
  \le C^* + \frac14 + \frac12  < M_0  .   \label{a priori-4}
\end{align}
In fact, these bounds will play a crucial role in the refined error estimate.

\subsection{A refined error estimate}

It is observed that the error estimate \eqref{convergence-7-5} is too rough; as a result, an inductive argument could not be applied to inequality \eqref{convergence-rough-3}. In this subsection, we perform a more refined error estimate for the nonlinear term, under the \emph{a priori} estimate~\eqref{a priori-4}.

We begin with the following rewritten form of the nonlinear error terms:
\begin{align*}
    \frac32 ( ( \hat{\Phi}^n  )^3 - (\phi^n)^3 )
   - \frac12 (( \hat{\Phi}^{n-1}  )^3 - (\phi^{n-1})^3 )
     & = \frac32 {\cal C}^n \hat{e}^n  -  \frac12  {\cal C}^{n-1} \hat{e}^{n-1}   \nonumber
\\
  &=
  {\cal C}^n
  \Big( \frac32 \hat{e}^n  - \frac12 \hat{e}^{n-1} \Big)
  + \frac12 \hat{e}^{n-1} ( {\cal C}^n - {\cal C}^{n-1} )  \nonumber
\\
  &=
     {\cal C}^n  \breve{\hat{e}}^{n+1/2}
  + \frac12 \hat{e}^{n-1} ( {\cal C}^n - {\cal C}^{n-1} )  .
\end{align*}
And also, similar to \eqref{convergence-5}, the following identity is always valid:
\begin{equation*}
   \breve{\hat{e}}^{n+1/2}  ( \hat{e}^{n+1} - \hat{e}^n )
=
   \frac12 ( ( \hat{e}^{n+1} )^2 - ( \hat{e}^n )^2  )
      - \frac14 ( ( \hat{e}^{n+1} - \hat{e}^n )^2 - ( \hat{e}^n - \hat{e}^{n-1} )^2
  +  ( \hat{e}^{n+1} - 2 \hat{e}^n + \hat{e}^{n-1} )^2 )  .
  \label{convergence-8-1}
\end{equation*}
This in turn leads to the following rewritten form
\begin{align}
    &\quad~
    \Big\langle   \frac32 ( ( \hat{\Phi}^n  )^3 - (\phi^n)^3 )
   - \frac12 ( ( \hat{\Phi}^{n-1}  )^3 - (\phi^{n-1})^3 )  ,
     \hat{e}^{n+1} - \hat{e}^n \Big\rangle   \nonumber
\\
   &=
     \frac12  \langle {\cal C}^n , ( \hat{e}^{n+1} )^2 \rangle
     - \frac12 \langle {\cal C}^n , ( \hat{e}^n )^2 \rangle
    - \frac14  \langle {\cal C}^n , ( \hat{e}^{n+1}  - \hat{e}^n )^2 \rangle
     + \frac14 \langle {\cal C}^n , ( \hat{e}^n - \hat{e}^{n-1} )^2 \rangle     \nonumber
\\
  &\quad
    - \frac14 \langle {\cal C}^n ,
    ( \hat{e}^{n+1}  - 2 \hat{e}^n + \hat{e}^{n+1} )^2 \rangle
  + \frac12 \langle   ( {\cal C}^n - {\cal C}^{n-1} )  \hat{e}^{n-1} ,
     \hat{e}^{n+1} - \hat{e}^n \rangle .
   \label{convergence-8-2}
\end{align}
For the fifth term appearing in the expansion of \eqref{convergence-8-2},
we apply the $\ell^\infty$ bound \eqref{convergence-7-3} and get
\begin{align}
    - \frac14  \langle {\cal C}^n ,
    ( \hat{e}^{n+1}  - 2 \hat{e}^n + \hat{e}^n )^2 \rangle
    &\ge - \frac14 \| {\cal C}^n \|_\infty
     \| \hat{e}^{n+1}  - 2 \hat{e}^n + \hat{e}^{n+1} \|_2^2
\ge
  - \frac{3 M_0^2}{4}  \|  \hat{e}^{n+1}  - 2 \hat{e}^n + \hat{e}^{n-1} \|_2^2  .
   \label{convergence-8-3}
\end{align}
In addition, we have the following $\ell^\infty$ estimate
\begin{align}
  & \quad~
  \| {\cal C}^{k+1} - {\cal C}^k \|_\infty
  =   \|  ( \hat{\Phi}^{k+1} )^2  - ( \hat{\Phi}^k )^2
    +  \hat{\Phi}^{k+1} \phi^{k+1} -  \hat{\Phi}^k \phi^k
    +  ( \phi^{k+1} )^2 - ( \phi^k  )^2 \|_\infty \nonumber
\\
 & \le
  ( \| \hat{\Phi}^{k+1}  \|_\infty + \| \phi^{k+1}  \|_\infty + \| \hat{\Phi}^k  \|_\infty )
   \|  \hat{\Phi}^{k+1} - \hat{\Phi}^k  \|_\infty
   + ( \| \phi^{k+1}  \|_\infty + \| \phi^k  \|_\infty + \| \hat{\Phi}^k  \|_\infty )
   \|  \phi^{k+1} - \phi^k  \|_\infty \nonumber
\\
 & \le
  3 M_0 \cdot M_0 \dt + 3M_0 \cdot M_0 \dt = 6 M_0^2 \dt ,
\label{convergence-8-4}
\end{align}
for $k=n, n-1$, in which the consistency estimates \eqref{consistency-4-2}, \eqref{consistency-4-3}, and the rough bound estimates \eqref{a priori-4-1}, \eqref{a priori-4} have been applied in the second inequality. As a direct consequence, the following lower bound for the last term appearing in~\eqref{convergence-8-2} becomes available:
\begin{align}
   \frac12 \langle   ( {\cal C}^n - {\cal C}^{n-1} )  \hat{e}^{n-1} ,
     \hat{e}^{n+1} - \hat{e}^n \rangle
  &   \ge - \frac12  \|  {\cal C}^n - {\cal C}^{n-1}  \|_\infty  \cdot \|  \hat{e}^{n-1} \|_2
     \cdot \| \hat{e}^{n+1} - \hat{e}^n  \|_2 \nonumber
\\
 & \ge
  - \frac12 \cdot 6 M_0^2 \dt \cdot \|  \hat{e}^{n-1} \|_2
     \cdot \| \hat{e}^{n+1} - \hat{e}^n  \|_2 \nonumber
\\
   &  \ge - \frac32 M_0^2 \dt ( \|  \hat{e}^{n-1} \|_2^2
     + \| \hat{e}^{n+1} - \hat{e}^n  \|_2^2 )  .
   \label{convergence-8-5}
\end{align}
We introduce the quantities
\begin{equation*}
   I_{nl}^k :=  \frac12 \langle {\cal C}^k ,
    (  \hat{e}^k )^2  \rangle  , \quad
  I_{nl,(2)}^k := \frac14 \langle {\cal C}^k ,
  ( \hat{e}^k - \hat{e}^{k-1} )^2 \rangle , \quad k = n, n+1 .  \label{convergence-8-6}
\end{equation*}
It is observed that the first and third terms in~\eqref{convergence-8-2}, are not $I_{nl}^{n+1}$ and $I_{nl, (2)}^{n+1}$, due to the inductive nonlinear coefficient functions. To apply the induction analysis in the later steps, we have to bound their difference. Using the preliminary estimate \eqref{convergence-8-4}, we have
\begin{align}
   \frac12 \langle {\cal C}^n , ( \hat{e}^{n+1} )^2 \rangle  - I_{nl}^{n+1}
  & =  \frac12 \langle {\cal C}^n - {\cal C}^{n+1} , ( \hat{e}^{n+1} )^2 \rangle  \nonumber
\\
  & \ge
  - \frac12 \| {\cal C}^{n+1} - {\cal C}^n \|_\infty \cdot \| \hat{e}^{n+1} \|_2^2
  \ge - 3 M_0^2 \dt \| \hat{e}^{n+1} \|_2^2  ,   \label{convergence-8-7-1}
\\
   - \frac14 \langle {\cal C}^n , ( \hat{e}^{n+1} - \hat{e}^n )^2 \rangle  + I_{nl, (2)}^{n+1}
  & =  \frac14 \langle {\cal C}^{n+1} - {\cal C}^n , ( \hat{e}^{n+1} - \hat{e}^n )^2 \rangle  \nonumber
\\
  & \ge
  - \frac14 \| {\cal C}^{n+1} - {\cal C}^n \|_\infty \cdot \| \hat{e}^{n+1} - \hat{e}^n \|_2^2
 \ge - \frac32 M_0^2 \dt \| \hat{e}^{n+1} - \hat{e}^n \|_2^2  .    \label{convergence-8-7-2}
\end{align}
A combination of~\eqref{convergence-8-2}, \eqref{convergence-8-3}, \eqref{convergence-8-5}, \eqref{convergence-8-7-1} and \eqref{convergence-8-7-2} yields a refined error estimate
\begin{align}
    & \quad
    \Big\langle   \frac32 ( ( \hat{\Phi}^n  )^3 - (\phi^n)^3 )
   - \frac12 ( ( \hat{\Phi}^{n-1}  )^3 - (\phi^{n-1})^3 )  ,
     \hat{e}^{n+1} - \hat{e}^n \Big\rangle    \nonumber
\\
   &\ge
   I_{nl}^{n+1} - I_{nl}^n - ( I_{nl, (2)}^{n+1} - I_{nl, (2)}^n )
   - 3 M_0^2 \dt  \| \hat{e}^{n+1} \|_2^2
   - 3 M_0^2 \dt  \| \hat{e}^{n+1} - \hat{e}^n \|_2^2
   - \frac32 M_0^2 \dt  \| \hat{e}^{n-1} \|_2^2  \nonumber
\\
  & \quad
  - \frac{3 M_0^2}{4}  \|  \hat{e}^{n+1}  - 2 \hat{e}^n + \hat{e}^{n-1} \|_2^2  .
   \label{convergence-8-13}
\end{align}

As a result, a substitution of \eqref{convergence-3}--\eqref{convergence-5}, \eqref{convergence-6-4} and \eqref{convergence-8-13} into \eqref{convergence-2} results in
\begin{align*}
	& \quad~
     \frac{1}{2 \dt} \| \hat{e}^{n+1} - \hat{e}^n \|_{-1,N}^2
      + \Big(\frac{A_0}{2}+\frac{1}{4}+\frac{\varepsilon^2}{8}(J*1)\Big) ( \| \hat{e}^{n+1} - \hat{e}^n \|_2^2 - \| \hat{e}^n - \hat{e}^{n-1} \|_2^2 )
      \nonumber
\\
  &\quad~
      + A_1 \dt \| \hat{e}^{n+1} - \hat{e}^n \|_2^2
      + \frac12 ( \varepsilon^2 (J * 1) - 1 ) ( \| \hat{e}^{n+1} \|_2^2 - \| \hat{e}^n \|_2^2  )
      \nonumber
\\
  &\quad~
   + \Big(\frac{A_0}{2}+\frac{1}{4}+\frac{\varepsilon^2}{8}(J*1)\Big)
   \| \hat{e}^{n+1} - 2 \hat{e}^n + \hat{e}^{n-1} \|_2^2
   + I_{nl}^{n+1}  - I_{nl}^n  - ( I_{nl, (2)}^{n+1} - I_{nl, (2)}^n )  \nonumber
\\
  &\le
     \frac{3 M_0^2}{4} \|  \hat{e}^{n+1}  - 2 \hat{e}^n + \hat{e}^{n-1} \|_2^2
     + 3 M_0^2 \dt  \| \hat{e}^{n+1} - \hat{e}^n \|_2^2
      \nonumber
\\
  &\quad~
     + \Big( \frac34 C_3 + 3 M_0^2 \Big) \dt \| \hat{e}^{n+1} \|^2_2
     + \Big( \frac14 C_3 + \frac32 M_0^2 \Big) \dt \| \hat{e}^{n-1} \|^2_2
    + \dt \| \tau_2^{n+1} \|_{-1,N}^2 .
\end{align*}
Using the condition (d) for the kernel and the condition \eqref{condition-A-1} for the parameter $A_0$ (which indicates that $\frac{A_0}{2}+\frac{1}{4} \ge \frac{3 M_0^2}{4}$), we get
\begin{align*}
	& \quad~
       \frac{\gamma_0}{2} ( \| \hat{e}^{n+1} \|_2^2 - \| \hat{e}^n \|_2^2 ) 	
      + \Big(\frac{A_0}{2}+\frac{1}{4}+\frac{\varepsilon^2}{8}(J*1)\Big) ( \| \hat{e}^{n+1} - \hat{e}^n \|_2^2
      - \| \hat{e}^n - \hat{e}^{n-1} \|_2^2 ) \nonumber
\\
& \quad~
      + I_{nl}^{n+1}  - I_{nl}^n  - ( I_{nl, (2)}^{n+1} - I_{nl, (2)}^n )
      \nonumber
\\
 & \le
     3 M_0^2 \dt  \| \hat{e}^{n+1} - \hat{e}^n \|_2^2
     + \Big( \frac34 C_3 + 3 M_0^2 \Big) \dt \| \hat{e}^{n+1} \|^2_2
     + \Big( \frac14 C_3 + \frac32 M_0^2 \Big) \dt \| \hat{e}^{n-1} \|^2_2
    + \dt \| \tau_2^{n+1} \|_{-1,N}^2 .
\end{align*}
The following quantity is introduced to facilitate the later analysis:
\begin{equation*}
     F^{n+1} : =  \frac{\gamma_0}{2} \| \hat{e}^{n+1} \|_2^2
     + I_{nl}^{n+1}
      + \Big(\frac{A_0}{2}+\frac{1}{4}+\frac{\varepsilon^2}{8}(J*1)\Big) \| \hat{e}^{n+1} - \hat{e}^n \|_2^2
      - I_{nl, (2)}^{n+1} .
  \label{convergence-9-3}
\end{equation*}
In fact, for the last term, we have the following estimate:
\begin{align*}
   I_{nl, (2)}^{n+1} & =  \frac14 \langle {\cal C}^{n+1} ,
  ( \hat{e}^{n+1} - \hat{e}^n )^2 \rangle
  \le \frac14 \| {\cal C}^{n+1} \|_\infty  \|  \hat{e}^{n+1} - \hat{e}^n \|_2^2
\le
     \frac{3 M_0^2 }{4}   \|  \hat{e}^{n+1} - \hat{e}^n \|_2^2
     \le \frac{A_0}{2}  \|  \hat{e}^{n+1} - \hat{e}^n \|_2^2 ,
\end{align*}
in which the $\ell^\infty$ bound for ${\cal C}^{n+1}$ can be obtained in a similar way as in~\eqref{convergence-7-3}. The condition \eqref{condition-A-1} for $A_0$ has been applied as well. This in turn implies that
\begin{equation*}
     F^{n+1} \ge  \frac{\gamma_0}{2} \| \hat{e}^{n+1} \|_2^2
     + I_{nl}^{n+1}
      + \Big(\frac{1}{4}+\frac{\varepsilon^2}{8}(J*1)\Big) \| \hat{e}^{n+1} - \hat{e}^n \|_2^2 \ge 0.
  \label{convergence-9-3-3}
\end{equation*}
As a consequence, the following estimate can be derived:
\begin{equation*}
    F^{n+1} - F^n  \le C_5 \dt F^{n+1}
    + \dt \| \tau_2^{n+1} \|_{-1,N}^2 ,  \quad
    C_5 = \max \{ 2 ( C_3 + 9 M_0^2 ) \gamma_0^{-1}  , 12 M_0^2 \} .
     \label{convergence-9-4}
\end{equation*}
Subsequently, if $\dt\le(2C_5)^{-1}$, an application of the discrete Gronwall's inequality gives the desired convergence estimate:
\begin{equation}
    F^{n+1} \le \hat{C}_2 ( \dt^6 + h^{2m} )  ,
   \label{convergence-9-5}
\end{equation}
due to the fact $\| \tau_2^k \|_{-1,N} \le C (\dt^3 + h^m)$ for $k \le n+1$. In particular, the following bound is observed:
\begin{equation}
    \| \hat{e}^{n+1}  \|_2
     \le \sqrt{2\hat{C}_2\gamma_0^{-1}} ( \dt^3 + h^m )  \le \dt^{\frac{5}{2}} + h^{m-\frac12} 
   \label{convergence-9-6}
\end{equation}
for $\dt\le(2\hat{C}_2)^{-1}\gamma_0$ and $h\le(2\hat{C}_2)^{-1}\gamma_0$,
so that the \emph{a priori} assumption \eqref{a priori-1} has been recovered at time instant $t_{n+1}$. Therefore, the analysis can be carried out in the induction style. This completes the error estimate for $\hat{e}$, the numerical error between the numerical solution $\phi$ and the constructed approximation solution $\hat{\Phi}$.

Of couse, the error estimate \eqref{convergence-0} becomes a direct consequence of the following identity
\begin{equation*}
  e^k = \hat{e}^k - \dt^2  \mathcal{P}_N \Phi^{(2)}_{\dt} , \quad
  \mbox{(by the construction~\eqref{consistency-1})} ,
\end{equation*}
combined with the fact that $\| ( \Phi^{(2)}_{\dt} )^k \|_2 \le C$ for any $k \ge 0$. This completes the proof of Theorem~\ref{thm:convergence}.

\begin{remark}
Since the inverse inequality used in \eqref{a priori-2} depends on the number of dimension,
we briefly illustrate the necessary modifications of the above derivation 
if one considers the three-dimensional case.

Instead of \eqref{a priori-1},
the $\ell^2$ assumption for the induction would be
\begin{equation}
  \| \hat{e}^k \|_2 \le \dt^{\frac{11}{4}} + h^{m-\frac14}  , \quad  k= n , n-1 .
   \label{a priori-1-3d}
\end{equation}
Then, under the requirement $C_1 h \le \dt \le C_2 h$,
an application of the 3-D inverse inequality gives
\begin{equation*}
  \| \hat{e}^k \|_\infty \le \frac{ \Cinv \| \hat{e}^k \|_2 }{h^{\frac{3}{2}}}
  \le C ( \dt^{\frac{5}{4}} + h^{m-\frac74} ) , \quad k= n , n-1 .  \label{a priori-2-3d}
\end{equation*}
The $\ell^\infty$ bounds of $\phi^{k}$ and $(\phi^{k}-\phi^{k-1})/\dt$ with $k=n,n-1$ can be similarly obtained as \eqref{a priori-3-1}--\eqref{a priori-3-3}.
We need to recover the estimate \eqref{a priori-1-3d} for $k=n+1$.
First, a rough error estimate, independent of the number of dimension, leads to \eqref{convergence-rough-2},
and an application of the estimate \eqref{a priori-1-3d} gives
\begin{equation*}
    \frac{\gamma_0}{4} \| \hat{e}^{n+1} \|_2^2
   \le C_4 ( \dt^{\frac{11}{2}} + h^{2 m - \frac12} ).
  \label{convergence-rough-3-3d}
\end{equation*}
With the linear refinement constraint $C_1 h \le \dt \le C_2 h$, applying the 3-D inverse inequality gives
	\begin{equation*}
  \| \hat{e}^{n+1} \|_\infty  \le \frac{\Cinv \| \hat{e}^{n+1} \|_2 }{h^{\frac32}}
   \le \hat {C}_1 ( \dt^{\frac{5}{4}} + h^{m - \frac74} ),
	\label{convergence-rough-4}
	\end{equation*}
so that the $\ell^\infty$ bounds for $\phi^{n+1}$ and $(\phi^{n+1}-\phi^n)/\dt$ can be derived as \eqref{a priori-4-1} and \eqref{a priori-4}.
Second, a refined error estimate can be performed to obtain \eqref{convergence-9-5},
and the estimate \eqref{convergence-9-6} needs to be replaced by
\begin{equation*}
    \| \hat{e}^{n+1}  \|_2
     \le \sqrt{\frac{2\hat{C}_2}{\gamma_0}} ( \dt^3 + h^m )  \le \dt^{\frac{11}{4}} + h^{m-\frac14} ,
   \label{convergence-9-6-3d}
\end{equation*}
so that the assumption \eqref{a priori-1-3d} is recovered at time instant $t_{n+1}$.
\end{remark}

\section{Energy stability analysis}
\label{sect_stability}

The following energy stability estimate can be established with respect to a modified energy.

\begin{theorem}
\label{thm:energy stab}
Under the assumptions of Theorem~\ref{thm:convergence},
if $A_0$, $A_1$ and $\dt$ satisfy
\begin{equation}
  A_0 \ge \frac32 M_0^2 ,  \quad  A_1 \ge  \frac{19}{4} M_0^2 , \quad
   C_J \varepsilon^2 \dt \le 2 (J * 1)
  \label{condition-A0-A1}
\end{equation}
with $C_J$ depending only on $J$,
we have a modified energy dissipation property for \eqref{scheme-2nd-1} as follows:
\[
\tilde{E}_N (\phi^{n+1}, \phi^n, \phi^{n-1}) \le \tilde{E}_N (\phi^n, \phi^{n-1}, \phi^{n-2}),
\]
where
\begin{align}
   \tilde{E}_N (\phi^{n+1}, \phi^n, \phi^{n-1}) &  := E_N (\phi^{n+1})
  + \frac{27}{8} M_0^2  \dt \| \phi^{n+1} - \phi^n \|_2^2
    + \Big(\frac{A_0}{2}+\frac{1}{4}+\frac{\varepsilon^2}{8}(J*1)\Big)
      \| \phi^{n+1} - \phi^n \|_2^2\nonumber
\\
  & \quad~
    - \frac14 \langle ( \phi^{n+1/2} )^2 + \phi^{n+1/2} \breve{\phi}^{n+1/2}
    + (\breve{\phi}^{n+1/2} )^2,   ( \phi^{n+1} - \phi^n )^2 \rangle.
\label{energy stab-0}
\end{align}
\end{theorem}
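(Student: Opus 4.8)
The plan is to test the scheme~\eqref{scheme-2nd-1} with $(-\Delta_N)^{-1}(\phi^{n+1}-\phi^n)$. Writing the scheme as $(\phi^{n+1}-\phi^n)/\dt = \Delta_N \mu^{n+1/2}$, with $\mu^{n+1/2}$ the bracketed expression in~\eqref{scheme-2nd-1}, the self-adjointness of $(-\Delta_N)^{-1}$ gives the identity $\langle \mu^{n+1/2}, \phi^{n+1}-\phi^n \rangle = -\frac1\dt \| \phi^{n+1}-\phi^n \|_{-1,N}^2 \le 0$. It therefore suffices to establish the one-sided estimate $\langle \mu^{n+1/2}, \phi^{n+1}-\phi^n \rangle \ge \tilde{E}_N(\phi^{n+1},\phi^n,\phi^{n-1}) - \tilde{E}_N(\phi^n,\phi^{n-1},\phi^{n-2})$, since the sign of the left-hand side then yields the claimed dissipation. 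I would first dispatch the linear pieces. Writing $\frac34\phi^{n+1}+\frac14\phi^{n-1} = \phi^{n+1/2} + \frac14(\phi^{n+1}-2\phi^n+\phi^{n-1})$ with $\phi^{n+1/2} := \frac12(\phi^{n+1}+\phi^n)$, and using the self-adjointness of $\mathcal{L}_N$ (valid since $J$ is even), the concave and nonlocal terms reproduce exactly the increments $-\langle \phi^{n+1/2}, \phi^{n+1}-\phi^n \rangle$ and $\varepsilon^2 \langle \mathcal{L}_N \phi^{n+1/2}, \phi^{n+1}-\phi^n \rangle$ of the quadratic and nonlocal parts of $E_N$; the stabilization term $A_0(\phi^{n+1}-2\phi^n+\phi^{n-1})$, through the algebraic identity of the type~\eqref{convergence-3}, produces the telescoping $\frac{A_0}2(\| \phi^{n+1}-\phi^n \|_2^2 - \| \phi^n-\phi^{n-1} \|_2^2)$ plus a nonnegative $\frac{A_0}2\| \phi^{n+1}-2\phi^n+\phi^{n-1} \|_2^2$, and the Douglas--Dupont term contributes the nonnegative dissipation $A_1\dt\| \phi^{n+1}-\phi^n \|_2^2$.

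The heart of the proof, and the main obstacle, is the nonlinear term $\langle \frac32(\phi^n)^3-\frac12(\phi^{n-1})^3, \phi^{n+1}-\phi^n \rangle$, which must be matched against the quartic increment $\frac14\langle (\phi^{n+1})^4-(\phi^n)^4, 1 \rangle$ of the bulk energy. I would use two polynomial identities: first, $\frac32(\phi^n)^3-\frac12(\phi^{n-1})^3 = (\breve{\phi}^{n+1/2})^3 - \frac38(\phi^n-\phi^{n-1})^2(5\phi^n+\phi^{n-1})$; and second, since $\phi^{n+1/2}-\breve{\phi}^{n+1/2} = \frac12(\phi^{n+1}-2\phi^n+\phi^{n-1})$, the difference of cubes gives $(\breve{\phi}^{n+1/2})^3 = (\phi^{n+1/2})^3 - \frac12(\phi^{n+1}-2\phi^n+\phi^{n-1})\,\mathcal{D}^{n+1/2}$, where $\mathcal{D}^{n+1/2} := (\phi^{n+1/2})^2+\phi^{n+1/2}\breve{\phi}^{n+1/2}+(\breve{\phi}^{n+1/2})^2 \ge 0$ is precisely the coefficient in the correction term of~\eqref{energy stab-0}. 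Pairing with $\phi^{n+1}-\phi^n$ and invoking the midpoint identity $\frac14\langle (\phi^{n+1})^4-(\phi^n)^4, 1 \rangle = \langle (\phi^{n+1/2})^3, \phi^{n+1}-\phi^n \rangle + \frac14\langle \phi^{n+1/2}, (\phi^{n+1}-\phi^n)^3 \rangle$ extracts the exact quartic increment, while the completing-the-square identity $(\phi^{n+1}-\phi^n)^2 - (\phi^{n+1}-\phi^n)(\phi^n-\phi^{n-1}) = \frac12(\phi^{n+1}-\phi^n)^2 + \frac12(\phi^{n+1}-2\phi^n+\phi^{n-1})^2 - \frac12(\phi^n-\phi^{n-1})^2$ rewrites the $\mathcal{D}^{n+1/2}$-contribution as the telescoping difference of the correction terms of~\eqref{energy stab-0} plus the remainders handled below.

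It remains to absorb the remainders, and this is where the hypotheses~\eqref{condition-A0-A1} enter. The $\ell^\infty$ bounds furnished by Theorem~\ref{thm:convergence}, namely $\| \phi^k \|_\infty, \| \breve{\phi}^{k+1/2} \|_\infty \le M_0$ and $\| \phi^k-\phi^{k-1} \|_\infty \le M_0\dt$, give $\| \mathcal{D}^{n+1/2} \|_\infty \le 3M_0^2$ and $\| \mathcal{D}^{n+1/2}-\mathcal{D}^{n-1/2} \|_\infty \le C M_0^2\dt$. The remainders then split into two families. The first consists of terms controlled by $\| \phi^{n+1}-2\phi^n+\phi^{n-1} \|_2^2$; the worst is $-\frac14\langle \mathcal{D}^{n+1/2}, (\phi^{n+1}-2\phi^n+\phi^{n-1})^2 \rangle \ge -\frac34 M_0^2\| \phi^{n+1}-2\phi^n+\phi^{n-1} \|_2^2$, which is dominated by the nonnegative $\frac{A_0}2\| \cdot \|_2^2$ term once $A_0\ge\frac32 M_0^2$. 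The second consists of terms of order $\dt\| \phi^{n+1}-\phi^n \|_2^2$ and $\dt\| \phi^n-\phi^{n-1} \|_2^2$, arising from $-\frac14\langle \phi^{n+1/2}, (\phi^{n+1}-\phi^n)^3 \rangle$ (bounded via $\| \phi^{n+1}-\phi^n \|_\infty \le M_0\dt$), from the cubic remainder $-\frac38(\phi^n-\phi^{n-1})^2(5\phi^n+\phi^{n-1})$, and from the $\mathcal{D}^{n+1/2}-\mathcal{D}^{n-1/2}$ mismatch produced by the telescoping; these are absorbed by the dissipation $A_1\dt\| \phi^{n+1}-\phi^n \|_2^2$ together with the telescoping stabilizer $\frac{27}8 M_0^2\dt(\| \phi^{n+1}-\phi^n \|_2^2-\| \phi^n-\phi^{n-1} \|_2^2)$ built into~\eqref{energy stab-0}. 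A careful tally of the worst constants is exactly what fixes the thresholds $A_1\ge\frac{19}4 M_0^2$ and the coefficient $\frac{27}8 M_0^2$.

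Finally, the nonlocal mismatch $\frac{\varepsilon^2}4\langle \mathcal{L}_N(\phi^{n+1}-2\phi^n+\phi^{n-1}), \phi^{n+1}-\phi^n \rangle$ must be controlled. Its $(J*1)$ part supplies exactly the $\frac{\varepsilon^2}8(J*1)$ coefficient of the telescoping square in~\eqref{energy stab-0}, plus a nonnegative $\frac{\varepsilon^2}8(J*1)\| \phi^{n+1}-2\phi^n+\phi^{n-1} \|_2^2$, whereas the convolution part $-\frac{\varepsilon^2}4\langle J*(\phi^{n+1}-2\phi^n+\phi^{n-1}), \phi^{n+1}-\phi^n \rangle$ is estimated through Lemma~\ref{lem:1}: writing $\phi^{n+1}-\phi^n = \Delta_N(-\Delta_N)^{-1}(\phi^{n+1}-\phi^n)$ and applying~\eqref{lem 1:0} with $\mathsf{f}=J$ yields a bound $\alpha\| \phi^{n+1}-2\phi^n+\phi^{n-1} \|_2^2 + \frac{C_J}{\alpha}\| \phi^{n+1}-\phi^n \|_{-1,N}^2$. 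The $H^{-1}$ term is swallowed by the left-hand side $-\frac1\dt\| \phi^{n+1}-\phi^n \|_{-1,N}^2$, while the choice $\alpha\sim\varepsilon^2 C_J\dt$ keeps the first term within the $\| \phi^{n+1}-2\phi^n+\phi^{n-1} \|_2^2$ budget precisely under the condition $C_J\varepsilon^2\dt\le 2(J*1)$. Collecting all contributions gives $\langle \mu^{n+1/2}, \phi^{n+1}-\phi^n \rangle \ge \tilde{E}_N(\phi^{n+1},\phi^n,\phi^{n-1}) - \tilde{E}_N(\phi^n,\phi^{n-1},\phi^{n-2})$, which completes the proof. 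The genuine difficulty lies entirely in the nonlinear bookkeeping of the two preceding paragraphs; the linear and nonlocal estimates are comparatively routine once Lemma~\ref{lem:1} is available.
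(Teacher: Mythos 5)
Your proposal is correct and follows essentially the same route as the paper's proof: testing with $(-\Delta_N)^{-1}(\phi^{n+1}-\phi^n)$, telescoping the linear and $A_0$ terms, treating the convolution part of the nonlocal mismatch via Lemma~\ref{lem:1} under the condition $C_J\varepsilon^2\dt\le 2(J*1)$, and absorbing the $O(\dt)$ remainders with $A_1$ and the $\frac{27}{8}M_0^2\dt$ correction. Your cubic decomposition through $(\breve{\phi}^{n+1/2})^3$ and the midpoint identity is algebraically identical to the paper's expansion \eqref{energy stab-6-1} (since $(\phi^{n+1/2})^3=\frac14((\phi^{n+1})^2+(\phi^n)^2)(\phi^{n+1}+\phi^n)-\frac18(\phi^{n+1}+\phi^n)(\phi^{n+1}-\phi^n)^2$), so the two arguments coincide term by term.
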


\begin{proof}
Taking a discrete inner product with~\eqref{scheme-2nd-1} by $(-\Delta_N)^{-1} (\phi^{n+1} - \phi^n)$ yields
\begin{align}
	& \quad~
  \frac{1}{\dt} \| \phi^{n+1} - \phi^n \|_{-1, N}^2
 + A_0 \langle \phi^{n+1} - 2 \phi^n + \phi^{n-1} ,
 \phi^{n+1} - \phi^n \rangle  + A_1 \dt  \| \phi^{n+1} - \phi^n \|_2^2 \nonumber
\\
  &=
     - \Big\langle \frac32 ( \phi^n )^3 -  \frac12 ( \phi^{n-1} )^3 ,
     \phi^{n+1} - \phi^n \Big\rangle
   + \langle \breve{\phi}^{n+1/2} , \phi^{n+1} - \phi^n \rangle
   - \varepsilon^2 \Big\langle {\cal L}_N \Big( \frac34 \phi^{n+1} + \frac14 \phi^{n-1} \Big) ,
  \phi^{n+1} - \phi^n \Big\rangle .   \label{energy stab-1}
\end{align}
For the artificial regularization term, the following identity is straightforward:
\begin{align}
\langle \phi^{n+1} - 2 \phi^n + \phi^{n-1} , \phi^{n+1} - \phi^n \rangle
=  \frac12 ( \| \phi^{n+1} - \phi^n \|_2^2 - \| \phi^n - \phi^{n-1} \|_2^2
  + \| \phi^{n+1} - 2 \phi^n + \phi^{n-1} \|_2^2 ) .
 \label{energy stab-3}
\end{align}
For the second linear term on the right-hand side, we see that
\begin{align}
   \langle \breve{\phi}^{n+1/2} , \phi^{n+1} - \phi^n \rangle
  &= \frac12 \langle \phi^{n+1} + \phi^n , \phi^{n+1} - \phi^n \rangle
    - \frac12 \langle \phi^{n+1} - 2 \phi^n + \phi^{n-1} ,
    \phi^{n+1} - \phi^n \rangle    \nonumber
\\
  &=
   \frac12 ( \| \phi^{n+1} \|_2^2 - \| \phi^n \|_2^2 )
      - \frac14 ( \| \phi^{n+1} - \phi^n \|_2^2 - \| \phi^n - \phi^{n-1} \|_2^2
  + \| \phi^{n+1} - 2 \phi^n + \phi^{n-1} \|_2^2 )  ,
  \label{energy stab-4}
\end{align}
in which the first step comes from the fact $\breve{\phi}^{n+1/2} = \frac12 ( \phi^{n+1} + \phi^n)  - \frac12 ( \phi^{n+1} - 2 \phi^n + \phi^{n-1})$. For the nonlocal diffusion term on the right-hand side, we rewrite it as
\begin{align}
  & \quad
  - \varepsilon^2 \Big\langle {\cal L}_N \Big( \frac34 \phi^{n+1} + \frac14 \phi^{n-1} \Big) ,
  \phi^{n+1} - \phi^n \Big\rangle  \nonumber
\\
  &= - \varepsilon^2 \Big\langle (J * 1) \Big( \frac34 \phi^{n+1} + \frac14 \phi^{n-1} \Big)
  - J * \Big( \frac34 \phi^{n+1} + \frac14 \phi^{n+1} \Big)  ,
   \phi^{n+1} - \phi^n \Big\rangle   \nonumber
\\
  &=
    - \varepsilon^2 (J * 1) \Big\langle  \frac34 \phi^{n+1} + \frac14 \phi^{n-1} ,
   \phi^{n+1} - \phi^n \Big\rangle
   + \varepsilon^2 \Big\langle J * \Big( \frac34 \phi^{n+1} + \frac14 \phi^{n-1} \Big) ,
    \phi^{n+1} - \phi^n \Big\rangle  .  \label{energy stab-5-1}
\end{align}
For the first term appearing in  \eqref{energy stab-5-1}, we have
\begin{align*}
   \Big\langle  \frac34 \phi^{n+1} + \frac14 \phi^{n-1}  ,
   \phi^{n+1} - \phi^n \Big\rangle
  &  = \frac12 ( \| \phi^{n+1} \|_2^2 - \| \phi^n \|_2^2  )
   + \frac18 ( \| \phi^{n+1} - \phi^n \|_2^2
   - \| \phi^n - \phi^{n-1} \|_2^2 ) \nonumber
\\
  & \quad
  + \frac18 \| \phi^{n+1} - 2 \phi^n + \phi^{n-1} \|_2^2   .
\end{align*}
Meanwhile, for the second term, we apply~\eqref{lem 1:0} in Lemma~\ref{lem:1} and obtain
\begin{align}
  & \quad~
   \varepsilon^2 \Big\langle J * \Big( \frac34 \phi^{n+1} + \frac14 \phi^{n-1} \Big)  ,
   \phi^{n+1} - \phi^n \Big\rangle  \nonumber
\\
   & = \frac{\varepsilon^2}{2} \langle J * ( \phi^{n+1} + \phi^n )   ,
    \phi^{n+1} - \phi^n  \rangle
   - \frac{\varepsilon^2}{4} \langle J * ( \phi^{n+1} - 2 \phi^n + \phi^{n-1} )   ,
   \Delta_N ( (-\Delta_N)^{-1} ( \phi^{n+1} - \phi^n ) ) \rangle  \nonumber
\\
  &\le
    \frac{\varepsilon^2}{2} ( \langle J * \phi^{n+1} , \phi^{n+1} \rangle
    -   \langle J * \phi^n , \phi^n \rangle)
    + \frac{1}{16} C_J  \varepsilon^4 \dt \| \phi^{n+1} - 2 \phi^n + \phi^{n-1} \|^2_2 + \frac{1}{\dt}
   \| \phi^{n+1} - \phi^n  \|_{-1,N}^2 ,    \label{energy stab-5-3}
\end{align}
where $C_J$ depends only on $J$.
Subsequently, a combination of \eqref{energy stab-5-1}--\eqref{energy stab-5-3} yields
\begin{align}
  & \quad~
   \varepsilon^2 \Big\langle {\cal L}_N \Big( \frac34 \phi^{n+1} + \frac14 \phi^{n-1} \Big) ,
  \phi^{n+1} - \phi^n \Big\rangle  \nonumber
\\
  &\ge  \frac{\varepsilon^2}{2} ( \langle {\cal L}_N \phi^{n+1} , \phi^{n+1}  \rangle
    - \langle {\cal L}_N \phi^n , \phi^n   \rangle )
     + \frac{\varepsilon^2}{8} (J * 1) ( \| \phi^{n+1} - \phi^n \|_2^2
   - \| \phi^n - \phi^{n-1} \|_2^2 )  \nonumber
\\
  & \quad
  + \Big( \frac{\varepsilon^2}{8}  (J * 1)  - \frac{1}{16} C_J \varepsilon^4 \dt \Big)
   \| \phi^{n+1} - 2 \phi^n + \phi^{n-1} \|_2^2
    - \frac{1}{\dt}  \| \phi^{n+1} - \phi^n \|_{-1,N}^2  .  \label{energy stab-5-4}
\end{align}

For the nonlinear inner product,  we begin with the following decomposition:
\begin{align}
  &
  \frac32 ( \phi^n )^3 -  \frac12 ( \phi^{n-1} )^3
  - \frac14 ( ( \phi^{n+1}  )^2 + (\phi^n )^2 ) (\phi^{n+1} + \phi^n ) \nonumber
\\
  = &
  - \frac38 ( 5 \phi^n + \phi^{n-1} ) ( \phi^n - \phi^{n-1} )^2
   - \frac18 ( \phi^{n+1} + \phi^n ) ( \phi^{n+1} - \phi^n )^2 \nonumber
\\
  &
  - \frac12 \Big( ( \phi^{n+1/2} )^2 + \phi^{n+1/2} \breve{\phi}^{n+1/2}
  + (\breve{\phi}^{n+1/2} )^2 \Big) ( \phi^{n+1} - 2 \phi^n + \phi^{n-1} ) , \label{energy stab-6-1}
\end{align}
where $\phi^{n+1/2} = \frac12 (\phi^{n+1} + \phi^n)$ and $\breve{\phi}^{n+1/2} = \frac32 \phi^n - \frac12 \phi^n$. For the first two terms appearing in \eqref{energy stab-6-1}, the following inner product estimates can be derived:
\begin{align}
  &
   - \frac38 \Big\langle ( 5 \phi^n + \phi^{n-1} ) ( \phi^n - \phi^{n-1} )^2  ,
  \phi^{n+1} - \phi^n \Big\rangle  \nonumber
\\
  & \qquad~\ge
  - \Big( \frac{15}{8} \| \phi^n \|_\infty + \frac38 \| \phi^{n-1} \|_\infty \Big)
  \cdot \| \phi^n - \phi^{n-1} \|_\infty \cdot \| \phi^n - \phi^{n-1}  \|_2
  \cdot \| \phi^{n+1} - \phi^n \|_2  \nonumber
\\
  & \qquad~\ge
  - \frac94 M_0 \cdot M_0 \dt  \cdot \| \phi^n - \phi^{n-1}  \|_2
  \cdot \| \phi^{n+1} - \phi^n \|_2 \nonumber
\\
  & \qquad~\ge - \frac98 M_0^2 \dt  ( \| \phi^n - \phi^{n-1}  \|_2^2
  + \| \phi^{n+1} - \phi^n \|_2^2 )  ,   \label{energy stab-6-2}
\\
  &
   - \frac18 \Big\langle ( \phi^{n+1} + \phi^n ) ( \phi^{n+1} - \phi^n )^2  ,
  \phi^{n+1} - \phi^n \Big\rangle   \nonumber
\\
  & \qquad~\ge
  - \frac18 ( \| \phi^{n+1} \|_\infty + \| \phi^n \|_\infty )
  \cdot \| \phi^{n+1} - \phi^n \|_\infty \cdot \| \phi^{n+1} - \phi^n \|_2^2  \nonumber
\\
  & \qquad~\ge
  - \frac14 M_0 \cdot M_0 \dt  \cdot \| \phi^{n+1} - \phi^n \|_2^2
 = - \frac14 M_0^2 \dt  \| \phi^{n+1} - \phi^n \|_2^2  ,  \label{energy stab-6-3}
\end{align}
in which the \emph{a priori} estimates~\eqref{a priori-3-1}, \eqref{a priori-3-2} and the bounds~\eqref{a priori-4-1}, \eqref{a priori-4} have been repeatedly applied. For the last term appearing in~\eqref{energy stab-6-1}, we denote ${\cal C}^{n+1/2} = ( \phi^{n+1/2} )^2 + \phi^{n+1/2} \breve{\phi}^{n+1/2} + (\breve{\phi}^{n+1/2} )^2$. In turn, the following estimates become available:
\begin{align}
  & \quad~
   \| {\cal C}^{n+1/2}  \|_\infty  \le
   \frac32 ( \|\phi^{n+1/2} \|_\infty^2
  + \| \breve{\phi}^{n+1/2} \|_\infty^2 )
  \le \frac32 ( M_0^2 + M_0^2 ) = 3 M_0^2 ,   \label{energy stab-6-4-1}
\\
  & \quad~
  \| {\cal C}^{n+1/2}  - {\cal C}^{n-1/2} \|_\infty    \nonumber
\\
  & \le
   \|  ( \phi^{n+1/2} )^2 - (\phi^{n-1/2} )^2 \|_\infty
  + \| \phi^{n+1/2} \breve{\phi}^{n+1/2} - \phi^{n-1/2} \breve{\phi}^{n-1/2}  \|_\infty
  + \| (\breve{\phi}^{n+1/2} )^2  - (\breve{\phi}^{n-1/2} )^2 \|_\infty  \nonumber
\\
  & \le
  ( \|  \phi^{n+1/2}  \|_\infty + \|\breve{\phi}^{n+1/2}\|_\infty + \| \phi^{n-1/2} \|_\infty )
  \cdot   \|  \phi^{n+1/2}  - \phi^{n-1/2}  \|_\infty    \nonumber
\\
  & \quad
  +  ( \| \breve{\phi}^{n+1/2} \|_\infty + \|\phi^{n-1/2}\|_\infty + \| \breve{\phi}^{n-1/2} \|_\infty )
   \cdot \| \breve{\phi}^{n+1/2}  - \breve{\phi}^{n-1/2} \|_\infty    \nonumber
\\
  & \le
  (M_0 + M_0 + M_0 ) \cdot M_0 \dt
  + ( M_0 + M_0 + M_0 ) \cdot 2 M_0 \dt
  = 9 M_0^2 \dt .  \label{energy stab-6-4-2}
\end{align}
Again, the \emph{a priori} estimates~\eqref{a priori-3-1}--\eqref{a priori-3-3} and the bounds~\eqref{a priori-4-1}, \eqref{a priori-4} have been repeatedly applied. Meanwhile, we introduce $I_{nl, (3)}^{k+1} := \frac14 \langle {\cal C}^{k+1/2} ,   ( \phi^{k+1} - \phi^k )^2 \rangle$. Then we get
\begin{align}
  & \quad~
 - \frac12 \Big\langle \Big( ( \phi^{n+1/2} )^2 + \phi^{n+1/2} \breve{\phi}^{n+1/2}
  + (\breve{\phi}^{n+1/2} )^2 \Big) ( \phi^{n+1} - 2 \phi^n + \phi^{n-1} )  ,
  \phi^{n+1} - \phi^n \Big\rangle  \nonumber 
\\
 & =
  - \frac14 \langle {\cal C}^{n+1/2} ,   ( \phi^{n+1} - \phi^n )^2 - ( \phi^n - \phi^{n-1} )^2
   +  ( \phi^{n+1} - 2 \phi^n + \phi^{n-1} )^2  \rangle  \nonumber
\\
 & =
   - \frac14 \langle {\cal C}^{n+1/2} ,   ( \phi^{n+1} - \phi^n )^2 \rangle
   + \frac14   \langle {\cal C}^{n-1/2} ,  ( \phi^n - \phi^{n-1} )^2  \rangle  \nonumber
\\
  & \quad
   + \frac14   \langle {\cal C}^{n+1/2} - {\cal C}^{n-1/2} ,
    ( \phi^n - \phi^{n-1} )^2  \rangle
  - \frac14  \langle {\cal C}^{n+1/2} ,
  ( \phi^{n+1} - 2 \phi^n + \phi^{n-1} )^2  \rangle .  \label{energy stab-6-4-3}
\end{align}
The last two terms could be bounded as follows:
\begin{align*}
  \frac14   \langle {\cal C}^{n+1/2} - {\cal C}^{n-1/2} ,
    ( \phi^n - \phi^{n-1} )^2  \rangle
   & \ge - \frac14   \| {\cal C}^{n+1/2} - {\cal C}^{n-1/2}  \|_\infty
    \cdot \| \phi^n - \phi^{n-1} \|_2^2     \nonumber
\\
 & \ge
  - \frac14   \cdot 9 M_0^2 \dt  \cdot \| \phi^n - \phi^{n-1} \|_2^2
   = - \frac94 M_0^2 \dt \| \phi^n - \phi^{n-1} \|_2^2  , 
\\
   - \frac14  \langle {\cal C}^{n+1/2} ,
  ( \phi^{n+1} - 2 \phi^n + \phi^{n-1} )^2  \rangle
   & \ge - \frac14   \| {\cal C}^{n+1/2}  \|_\infty
    \cdot \| \phi^{n+1} - 2 \phi^n + \phi^{n-1} \|_2^2     \nonumber
\\
 & \ge
  - \frac34 M_0^2 \| \phi^{n+1} - 2 \phi^n + \phi^{n-1} \|_2^2  , 
\end{align*}
by using the preliminary estimates \eqref{energy stab-6-4-1} and \eqref{energy stab-6-4-2}. Going back~\eqref{energy stab-6-4-3}, we obtain
\begin{align}
  & \quad~
 - \frac12 \Big\langle \Big( ( \phi^{n+1/2} )^2 + \phi^{n+1/2} \breve{\phi}^{n+1/2}
  + (\breve{\phi}^{n+1/2} )^2 \Big) ( \phi^{n+1} - 2 \phi^n + \phi^{n-1} )  ,
  \phi^{n+1} - \phi^n \Big\rangle  \nonumber
\\
  & \ge
    - I_{nl, (3)}^{n+1} + I_{nl, (3)}^n
    - \frac94 M_0^2 \dt \| \phi^n - \phi^{n-1} \|_2^2
    - \frac34 M_0^2 \| \phi^{n+1} - 2 \phi^n + \phi^{n-1} \|_2^2  .  \label{energy stab-6-4-6}
\end{align}
On the other hand, the following estimate is straightforward:
\begin{eqnarray}
 \Big\langle \frac14 ( ( \phi^{n+1}  )^2 + (\phi^n )^2 ) (\phi^{n+1} + \phi^n ) ,
   \phi^{n+1} - \phi^n \Big\rangle
= \frac14 ( \| \phi^{n+1} \|_4^4 - \|  \phi^n \|_4^4 ) .
   \label{energy stab-6-7}
\end{eqnarray}
Therefore, a combination of~\eqref{energy stab-6-2}, \eqref{energy stab-6-3}, \eqref{energy stab-6-4-6}, \eqref{energy stab-6-7} and \eqref{energy stab-6-1} yields
\begin{align}
 \Big\langle \frac32 ( \phi^n  )^3  - \frac12 ( \phi^{n-1} )^3 , \phi^{n+1} - \phi^n \Big\rangle
  & \ge
    \frac14 ( \| \phi^{n+1} \|_4^4 - \|  \phi^n \|_4^4 )
    - I_{nl, (3)}^{n+1} + I_{nl, (3)}^n
    - \frac34 M_0^2 \| \phi^{n+1} - 2 \phi^n + \phi^{n-1} \|_2^2    \nonumber
\\
  & \quad
   -  \frac{11}{8} M_0^2 \dt \| \phi^{n+1} - \phi^n \|_2^2
   -  \frac{27}{8} M_0^2 \dt \| \phi^n - \phi^{n-1} \|_2^2    .  \label{energy stab-6-8}
\end{align}

Finally, a substitution of~\eqref{energy stab-3}, \eqref{energy stab-4}, \eqref{energy stab-5-4} and \eqref{energy stab-6-8} into \eqref{energy stab-1} results in
\begin{align*}
   &
     E_N (\phi^{n+1} )- E_N (\phi^n )
     - I_{nl, (3)}^{n+1} + I_{nl, (3)}^n    \nonumber
\\
   &
      + \Big( \frac{A_0}{2}+\frac{1}{4}+\frac{\varepsilon^2}{8}(J*1) \Big)
      ( \| \phi^{n+1} - \phi^n \|_2^2 - \| \phi^n - \phi^{n-1} \|_2^2 )
      \nonumber
\\
  &
   + \Big( \frac{A_0}{2}+\frac{1}{4}+\frac{\varepsilon^2}{8}(J*1)
     - \frac{1}{16} C_J \varepsilon^4  \dt - \frac34 M_0^2   \Big)
    \| \phi^{n+1} - 2 \phi^n + \phi^{n-1} \|_2^2    \nonumber
\\
  &
   + \Big( A_1 - \frac{11}{8} M_0^2 \Big) \dt \| \phi^{n+1} - \phi^n \|_2^2
   - \frac{27}{8} M_0^2 \dt \| \phi^n - \phi^{n-1} \|_2^2  \le 0 .
\end{align*}
As a result, the constraint \eqref{condition-A0-A1} leads to
\begin{equation*}
  \frac{A_0}{2}+\frac{1}{4}- \frac34 M_0^2 \ge 0 , \quad
  \frac{\varepsilon^2}{8}(J*1)  - \frac{1}{16} C_J \varepsilon^4  \dt \ge 0 , \quad
  A_1 - \frac{11}{8} M_0^2  \ge \frac{27}{8} M_0^2 ,
\end{equation*}
and thus, we obtain a modified energy inequality
\[
   \tilde{E}_N (\phi^{n+1}, \phi^n, \phi^{n-1} )- \tilde{E}_N (\phi^n, \phi^{n-1}, \phi^{n-2} ) \le 0 .
\]
This completes the proof of Theorem \ref{thm:energy stab}.
\end{proof}

\begin{remark}
In the modified energy \eqref{energy stab-0}, we see that although the correction terms include a negative part, $- I_{nl, (3)}^{n+1} = - \frac14 \langle {\cal C}^{n+1/2} ,   ( \phi^{n+1} - \phi^n )^2 \rangle$, the overall correction values are still non-negative. The preliminary estimate \eqref{energy stab-6-4-1} reveals that
\begin{equation}
\label{estimate_I3}
- I_{nl, (3)}^{n+1}
=
    - \frac14 \langle {\cal C}^{n+1/2} ,   ( \phi^{n+1} - \phi^n )^2 \rangle
\ge
    - \frac14 \| {\cal C}^{n+1/2} \|_\infty \| \phi^{n+1} - \phi^n \|_2^2
\ge
    - \frac34 M_0^2  \| \phi^{n+1} - \phi^n \|_2^2  ,
\end{equation}
which in turn gives
\begin{equation*}
    \Big(\frac{A_0}{2}+\frac{1}{4}+\frac{\varepsilon^2}{8}(J*1)\Big)
      \| \phi^{n+1} - \phi^n \|_2^2  \ge   I_{nl, (3)}^{n+1} . 
\end{equation*}
As a consequence, the modified energy dissipation property \eqref{energy stab-0} leads to a uniform-in-time bound for the original energy functional. More precisely, for any $n\ge 2$, we have
\begin{equation}
\label{energybound1}
  E_N (\phi^n) \le  \tilde{E}_N (\phi^n, \phi^{n-1}, \phi^{n-2} ) \le \cdots
  \le  \tilde{E}_N (\phi^2, \phi^1, \phi^0 ),
\end{equation}
where, by \eqref{energy stab-0} and \eqref{estimate_I3},
\begin{align*}
\tilde{E}_N (\phi^2, \phi^1, \phi^0 )
&  = E_N (\phi^2) + \frac{27}{8} M_0^2  \dt \| \phi^2 - \phi^1 \|_2^2
+ \Big(\frac{A_0}{2}+\frac{1}{4}+\frac{\varepsilon^2}{8}(J*1)\Big) \| \phi^2 - \phi^1 \|_2^2
    - \frac14 \langle {\cal C}^{3/2},   ( \phi^2 - \phi^1 )^2 \rangle \nonumber \\
& \le E_N (\phi^2) + \Big(\frac{A_0}{2}+\frac{1}{4}+\frac{\varepsilon^2}{8}(J*1) + \frac34 M_0^2 + \frac{27}{8} M_0^2  \dt\Big) \| \phi^2 - \phi^1 \|_2^2. 
\end{align*}
By conducting similar deductions as done in~\cite{du18}, we can obtain
\[
E_N (\phi^2) \le E_N (\phi^1)
+ \frac{\varepsilon^2}{8} \langle \mathcal{L}_N (\phi^1-\phi^0), \phi^1-\phi^0 \rangle
+ \Big(\frac{A_0}{2}+\frac{1}{4}\Big) \| \phi^1 - \phi^0 \|_2^2 + \frac{4A_0}{3} \| \phi^2 - \phi^1 \|_2^2.
\]
For the nonlocal term, similar to the proof of Lemma \ref{lem:1}~\cite{LiX21a}, we have
\begin{align*}
\frac{\varepsilon^2}{8} \langle \mathcal{L}_N (\phi^1-\phi^0), \phi^1-\phi^0 \rangle
& = \frac{\varepsilon^2}{8}(J*1) \| \phi^1-\phi^0 \|_2^2
- \frac{\varepsilon^2}{8} \langle J * (\phi^1-\phi^0), \phi^1-\phi^0 \rangle \\
& \le \frac{\varepsilon^2}{8}(J*1) \| \phi^1-\phi^0 \|_2^2
+ \frac{\varepsilon^2}{8} \tilde{C}_J |\Omega| \| \phi^1-\phi^0 \|_2^2,
\end{align*}
where $\tilde{C}_J$ depends only on the kernel $J$. Then we obtain
\begin{equation*}
E_N (\phi^2) \le E_N (\phi^1)
+ \Big(\frac{A_0}{2}+\frac{1}{4}+\frac{\varepsilon^2}{8}(J*1)+\frac{\varepsilon^2}{8} \tilde{C}_J |\Omega|\Big)
\| \phi^1 - \phi^0 \|_2^2 + \frac{4A_0}{3} \| \phi^2 - \phi^1 \|_2^2.
\end{equation*}
For the numerical solution $\phi^1$ at time $t=t_1$, by either the discrete gradient scheme or the second-order RK method (discussed at the end of Section \ref{sect_scheme}), the following initial accuracy is available:
\begin{equation}
\label{energybound4}
E_N (\phi^1) \le E_N(\phi^0) + c_1 \dt^2,
\end{equation}
where $c_1$ is independent of $\dt$. Combining \eqref{energybound1}--\eqref{energybound4} and the estimate \eqref{a priori-4}, we arrive at
\[
E_N(\phi^n) \le E_N(\phi^0) + C \dt^2
\]
with $C$ independent of $\dt$. This gives a uniform bound of the original energy functional.
\end{remark}

\begin{remark}
Double stabilization terms, namely $A_0 \Delta_N ( \phi^{n+1} - 2 \phi^n + \phi^{n-1})$ and $A_1 \dt \Delta_N ( \phi^{n+1} - \phi^n )$, have to be included in the modified Crank--Nicolson scheme~\eqref{scheme-2nd-1} to ensure the energy stability estimate, as demonstrated in the proof of Theorem~\ref{thm:energy stab}. Meanwhile, for the modified BDF2 scheme reported in~\cite{LiX21b}, only one stabilization term, $A_0 \Delta_N ( \phi^{n+1} - 2 \phi^n + \phi^{n-1})$, is necessary in the theoretical justification of the energy stability analysis. Such a difference comes from the subtle fact that, the BDF2 temporal discretization brings more numerical diffusion than the Crank--Nicolson approximation, since an inner product with the nonlocal diffusion term by the discrete temporal derivative gives an $O (1)$ coefficient of $\| \phi^{n+1} - \phi^n \|_2^2$ of numerical stabilization in the BDF2 method, while such an inner product yields an almost exact energy identity in the Crank--Nicolson approximation. See the related energy estimates for the BDF2 approaches~\cite{cheng2019a, WangL2018, yan18} and the Crank--Nicolson ones~\cite{cheng16a, diegel17, diegel16, guo16, guo2021}. In particular, for the classic Cahn--Hilliard equation, it turns out that the theoretical estimate has been available for the stabilized BDF2 scheme~\cite{LiD2017, LiD2017b}, while the associated estimate for the Crank--Nicolson one has faced serious difficulties. Also see a related work~\cite{Meng20} for the artificial regularization parameter analysis for the no-slope-selection thin film model.

On the other hand, the modified energy functional for the energy stability estimate reported for the BDF2 scheme~\cite{LiX21b} takes a form of
\begin{equation}
\tilde{E}_N^* (\phi^{n+1}, \phi^n) = E_N (\phi^{n+1})
 + \frac{A_0 +1}{2} \| \phi^{n+1} - \phi^n \|_2^2
 + \frac{1}{4\dt} \| \phi^{n+1} - \phi^n \|_{-1,N}^2 .
 \label{energy stab-BDF2-0}
\end{equation}
In comparison with the modified energy functional~\eqref{energy stab-0} for the Crank--Nicolson scheme, an $O (\dt)$ deviation away from the original functional is observed in~\eqref{energy stab-BDF2-0} (due to the correction term $\frac{1}{4\dt} \| \phi^{n+1} - \phi^n \|_{-1,N}^2$), while an $O (\dt^2)$ approximation is preserved in~\eqref{energy stab-0}. Therefore, the energy dissipation property, as stated in Theorem \ref{thm:energy stab}, is a closer approximation to the original physical system than the BDF2 approach.
\end{remark}

\section{Numerical experiments}
\label{sect_numerical}

In this section, we will carry out some numerical experiments to verify the theoretical results of the numerical scheme \eqref{scheme-2nd-1} in the two-dimensional case. The choice of the kernel function $J$ in the nonlocal diffusion operator is crucial.
We consider a family of Gauss-type functions
\begin{equation}
\label{kernel}
J_\delta(\bx) = \frac{4}{\pi\delta^4} \mathrm{e}^{-\frac{|\bx|^2}{\delta^2}}, \quad \bx \in \mathbb{R}^2,
\end{equation}
where $\delta>0$ is a parameter.
Obviously, $J_\delta$ defined by \eqref{kernel} is even but not periodic. Note that $J_\delta$ decays to zero exponentially as $|\bx|\to\infty$,
so it is reasonable to view $J_\delta$ as a function supported in $\Omega$
as long as $\delta$ is smaller than the size of $\Omega$.
Then, we can extend it periodically to the whole space to obtain the periodic kernel function.
Since $J_\delta * 1 = 4/\delta^2$, the condition (d) is equivalent to $\delta<2\varepsilon$.
The action of the discrete nonlocal operator $\mathcal{L}_N$ can be implemented by the fast Fourier transform,
and we refer the readers to Lemma 3 in~\cite{du18} for the detailed discussions.

Theoretically, the stabilization constants $A_0$ and $A_1$ should satisfy the restriction \eqref{condition-A0-A1} for the sake of the energy stability.
In practice, we find that the numerical solutions are always located in an interval slightly larger than $[-1,1]$,
and it suffices to set $A_0=2$ and $A_1=5$  for the stability in all the numerical experiments below.
To generate the numerical solution $\phi^1$, we adopt the first-order stabilized semi-implicit scheme (i.e., the scheme (13) studied in~\cite{du18}) with the stabilization constant equal to $2$.

First, we test the temporal convergence rates of the fully-discrete scheme \eqref{scheme-2nd-1}.
We consider the square domain $\Omega=(-1,1)\times(-1,1)$ on which the uniform $1024\times1024$ mesh is adopted. The periodic boundary condition is enforced, and the smooth initial value is taken: 
\[
\phi_0(x,y) = 0.5 \sin \pi x \sin \pi y + 0.1, \quad (x,y) \in \Omega.
\]
The convergence rates will be tested for the cases with various $\varepsilon$ and $\delta$
by computing the numerical solution at time $t=0.05$.
The numerical solutions are computed by the scheme \eqref{scheme-2nd-1} with various time step sizes $\dt=0.005\times 2^{-k}$ with $k=0,1,\dots,8$.
To calculate the numerical errors, we treat the solution computed by $\dt=0.001\times 2^{-8}$ as the benchmark.
Figure \ref{fig_conv} shows the discrete $\ell^2$ errors of the numerical solutions with various $\varepsilon$ and $\delta$. For each case, the second-order temporal convergence rate is obvious.

\begin{figure}[h]
\centering
\includegraphics[width=0.45\textwidth]{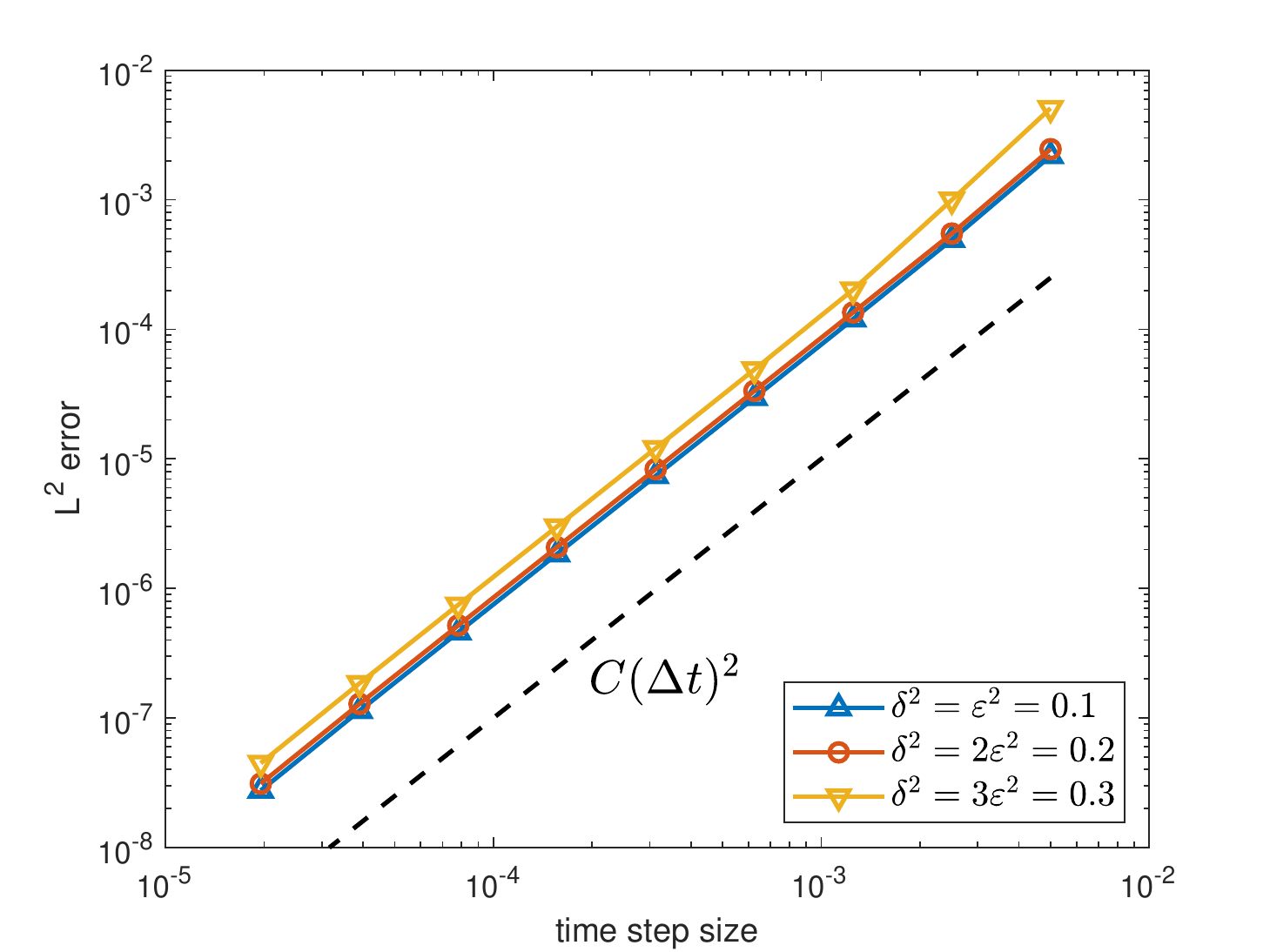}
\includegraphics[width=0.45\textwidth]{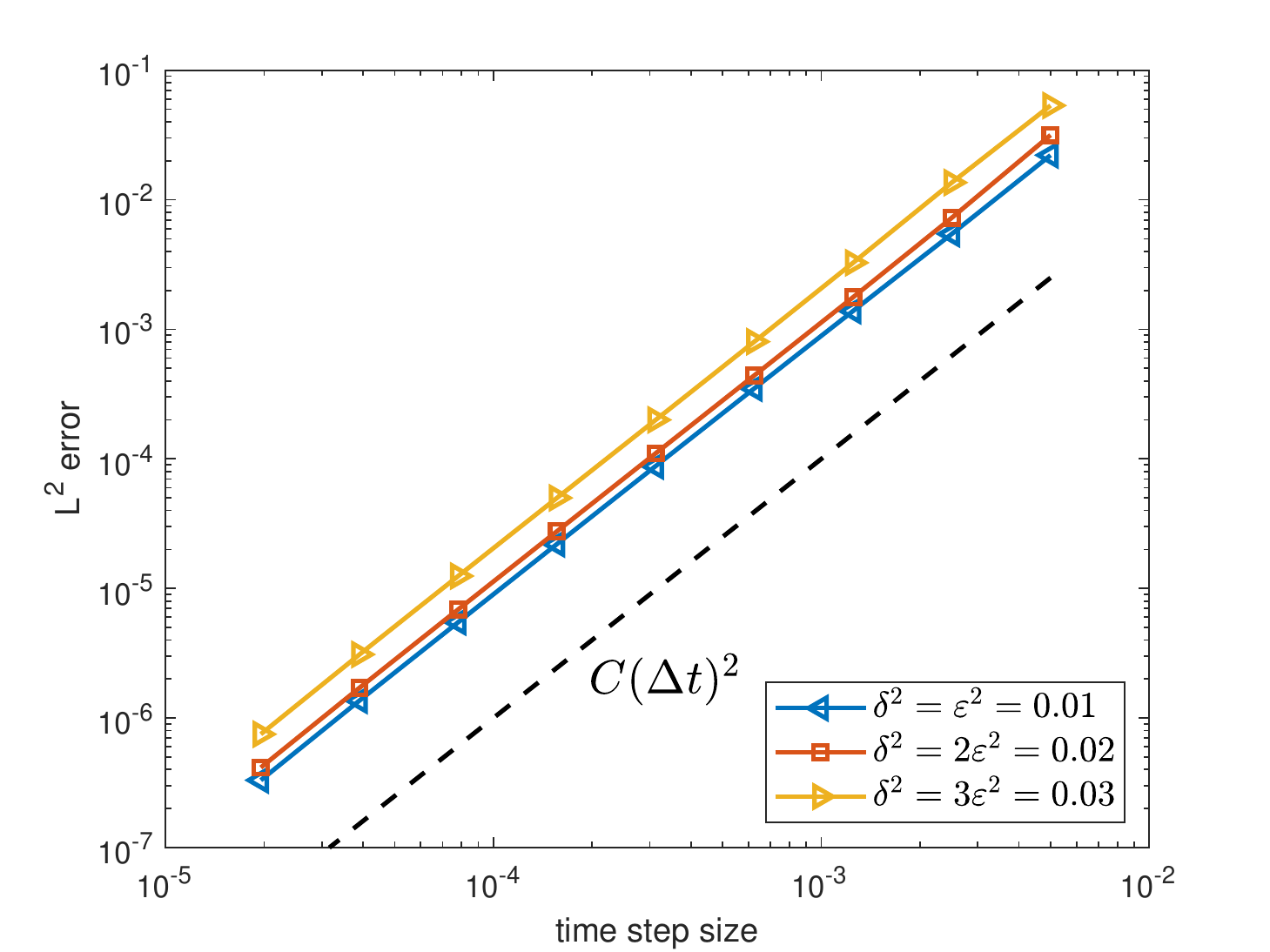}
\caption{Temporal convergence tests: $\varepsilon^2=0.1$ (left) and $\varepsilon^2=0.01$ (right)}
\label{fig_conv}
\end{figure}

Second, we verify the energy stability by simulating the coarsening dynamics. A $t^{-\frac{1}{3}}$ power law of the rate of the energy decay has been predicted in~\cite{DaiDu16}, i.e., $E(t)\sim t^{-\frac13}$, for the classic Cahn--Hilliard equation. Although there has been no similar theoretical analysis for the nonlocal version, we can conduct a numerical simulation of the power law for the NCH equation. Let $\Omega=(-2\pi,2\pi)\times(-2\pi,2\pi)$, and the initial configuration is given by the random data uniformly distributed from $-0.1$ to $0.1$ on each point in a uniform mesh. To accelerate the computations, we adopt variable time step sizes, i.e., $\dt=0.001$ on the time interval $[0,1000)$, $\dt=0.01$ on $[1000,10000)$, and $\dt=0.1$ for $t\ge10000$ (if needed).

With $\delta=0.05$, we perform the simulation on the $512\times512$ spatial mesh.
The evolutions of the energies for the cases $\varepsilon=0.1$, $0.08$, $0.06$, and $0.04$ are displayed in Figure \ref{fig_energy} (left).
For each case, the energy decay is obvious, and the energy decay rate satisfies the $t^{-\frac13}$ power law. More precisely, we can take a logarithmic fitting of the energy in the form $E(t)\sim b_e t^{m_e}$,
namely, a linear fitting applied to $\ln E(t)$ in terms of $\ln t$.
The digits of the coefficients $m_e$ and $b_e$ are collected in Table \ref{table1},
where the values of $m_e$ are close to $-\frac13$, especially when $\varepsilon$ is small.

In addition, we also carry out the simulation with $\delta=0.005$ on the $1024\times1024$ spatial mesh. For the cases $\varepsilon=0.1$, $0.08$, $0.06$, and $0.04$, the right graph in Figure \ref{fig_energy} plots the energy curves and the coefficients of the logarithmic fitting of the energies are listed in Table \ref{table2}, where the $t^{-\frac13}$ power law of the energy decay can be observed. Figure \ref{fig_coarsening} is devoted to the snapshots of the computed solutions
at $t=1$, $10$, $60$, $400$, $2000$, and $10000$ for the coarsening dynamics with $\varepsilon=0.04$. This figure implies the phase transition beginning with the a disorder state towards the order states and the steady state at around $t=10000$.

It is observed that there is no significant difference between these numerical results
and those shown in \cite{du18} and \cite{LiX21b},
although an extra stabilization term $A_1\dt(\phi^{n+1}-\phi^n)$ is used
in comparison with the second-order scheme in \cite{du18}.

\begin{figure}[h]
\centering
\includegraphics[width=0.45\textwidth]{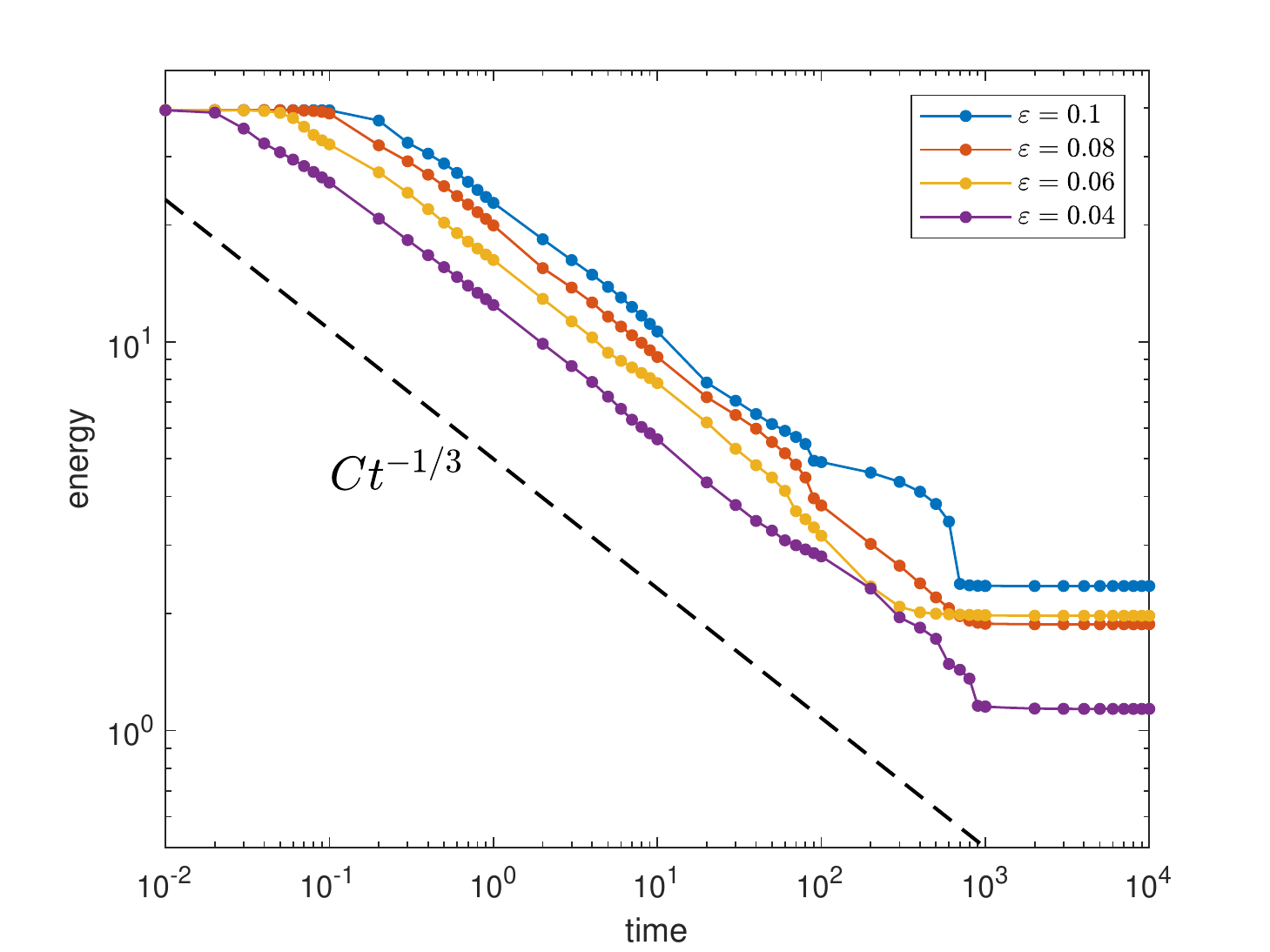}
\includegraphics[width=0.45\textwidth]{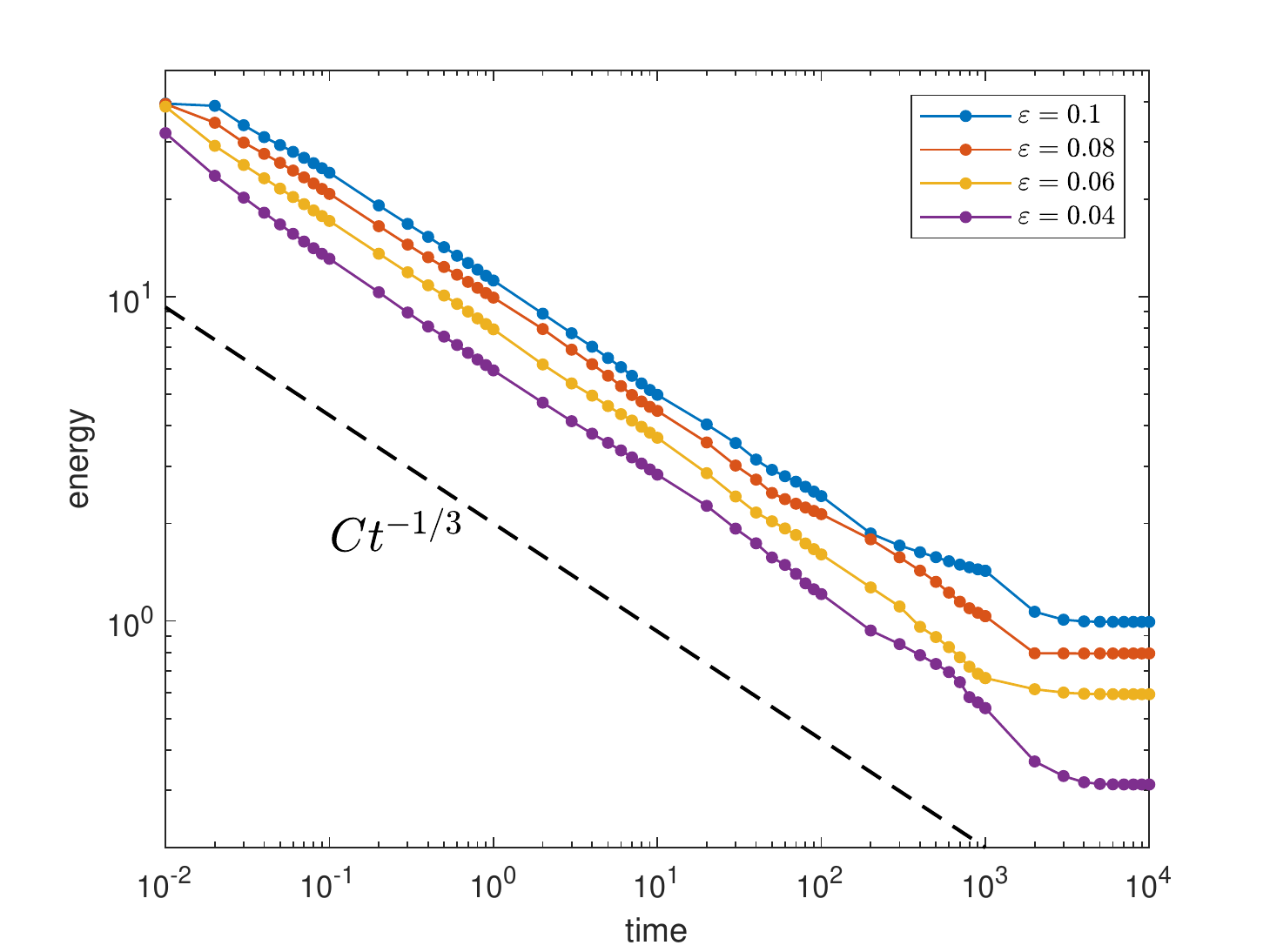}
\caption{Evolutions of the energies for the cases $\delta=0.05$ (left) and $\delta=0.005$ (right)}
\label{fig_energy}
\end{figure}

\begin{table}[h!]\footnotesize\tabcolsep 16pt
\begin{center}
\caption{Coefficients of the fitting $E(t)\sim b_e t^{m_e}$ for the case $\delta=0.05$}\label{table1}\vspace{-2mm}
\end{center}
\begin{center}
\begin{tabular}{c|ccccccc}\toprule
$\varepsilon$ & $0.1$ & $0.09$ & $0.08$ & $0.07$ & $0.06$ & $0.05$ & $0.04$ \\
\hline
$m_e$ & $-0.304$ & $-0.304$ & $-0.323$ & $-0.322$ & $-0.324$ & $-0.333$ & $-0.339$ \\
$b_e$ & $22.447$ & $21.304$ & $19.629$ & $18.090$ & $16.204$ & $14.201$ & $12.324$ \\
\bottomrule
\end{tabular}
\end{center}
\end{table}

\begin{table}[h!]\footnotesize\tabcolsep 16pt
\begin{center}
\caption{Coefficients of the fitting $E(t)\sim b_e t^{m_e}$ for the case $\delta=0.005$}\label{table2}\vspace{-2mm}
\end{center}
\begin{center}
\begin{tabular}{c|ccccccc}\toprule
$\varepsilon$ & $0.1$ & $0.09$ & $0.08$ & $0.07$ & $0.06$ & $0.05$ & $0.04$ \\
\hline
$m_e$ & $-0.343$ & $-0.331$ & $-0.337$ & $-0.335$ & $-0.336$ & $-0.349$ & $-0.330$ \\
$b_e$ & $11.158$ & $10.534$ & $9.769$ & $8.854$ & $7.940$ & $6.964$ & $6.009$ \\
\bottomrule
\end{tabular}
\end{center}
\end{table}

\begin{figure}[!htp]
\centering
\includegraphics[width=0.33\textwidth]{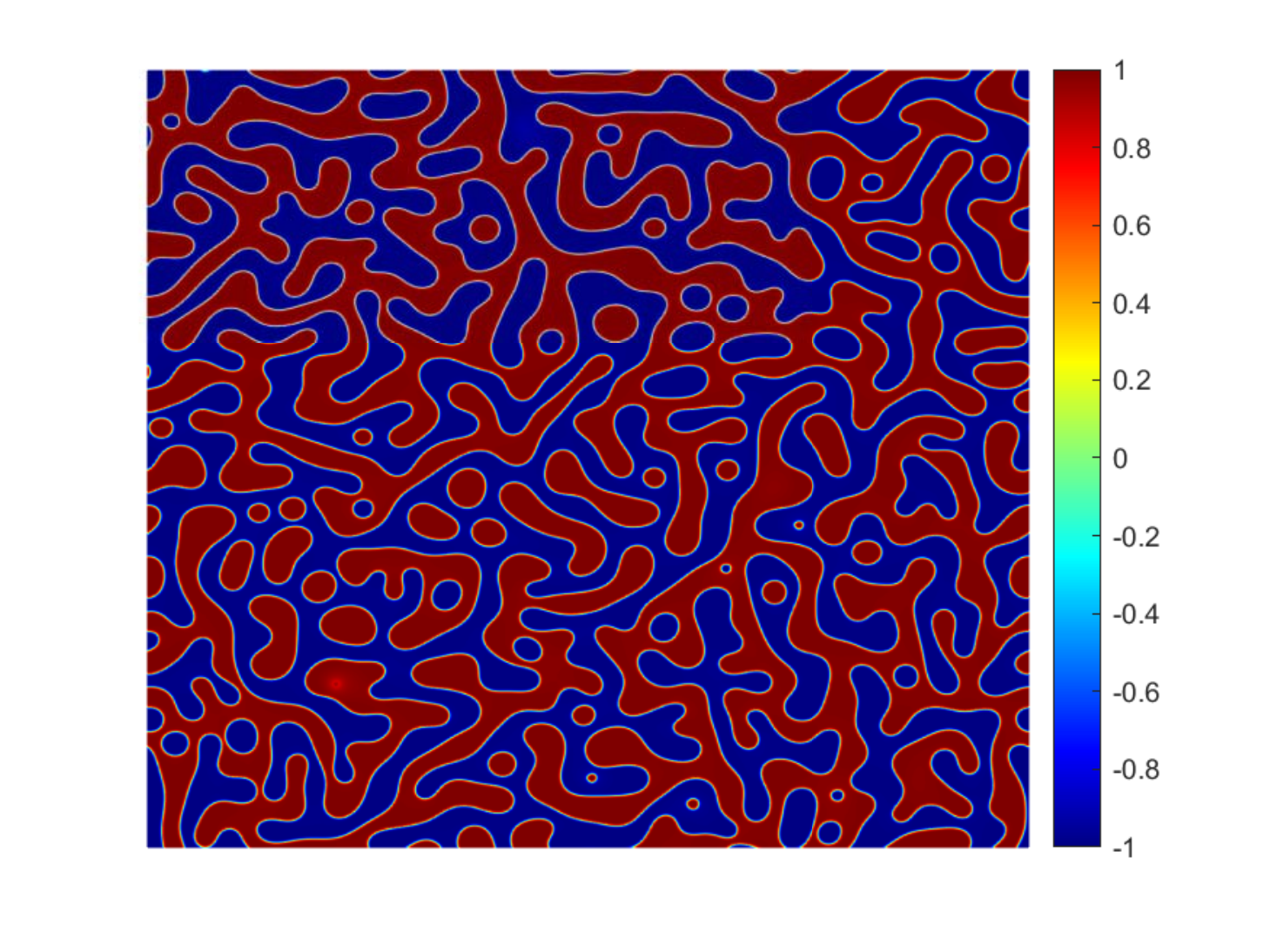}\hspace{-0.4cm}
\includegraphics[width=0.33\textwidth]{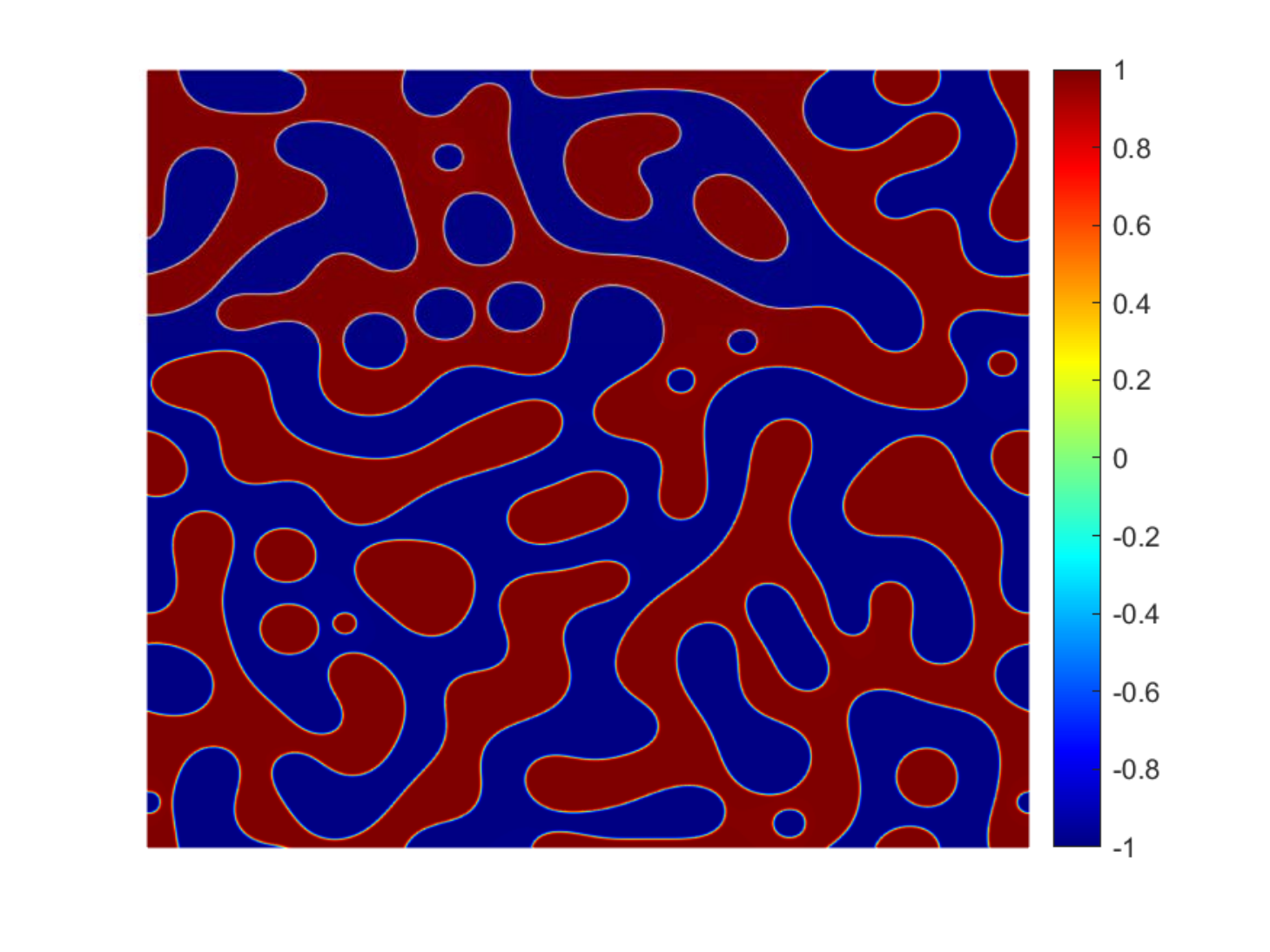}\hspace{-0.4cm}
\includegraphics[width=0.33\textwidth]{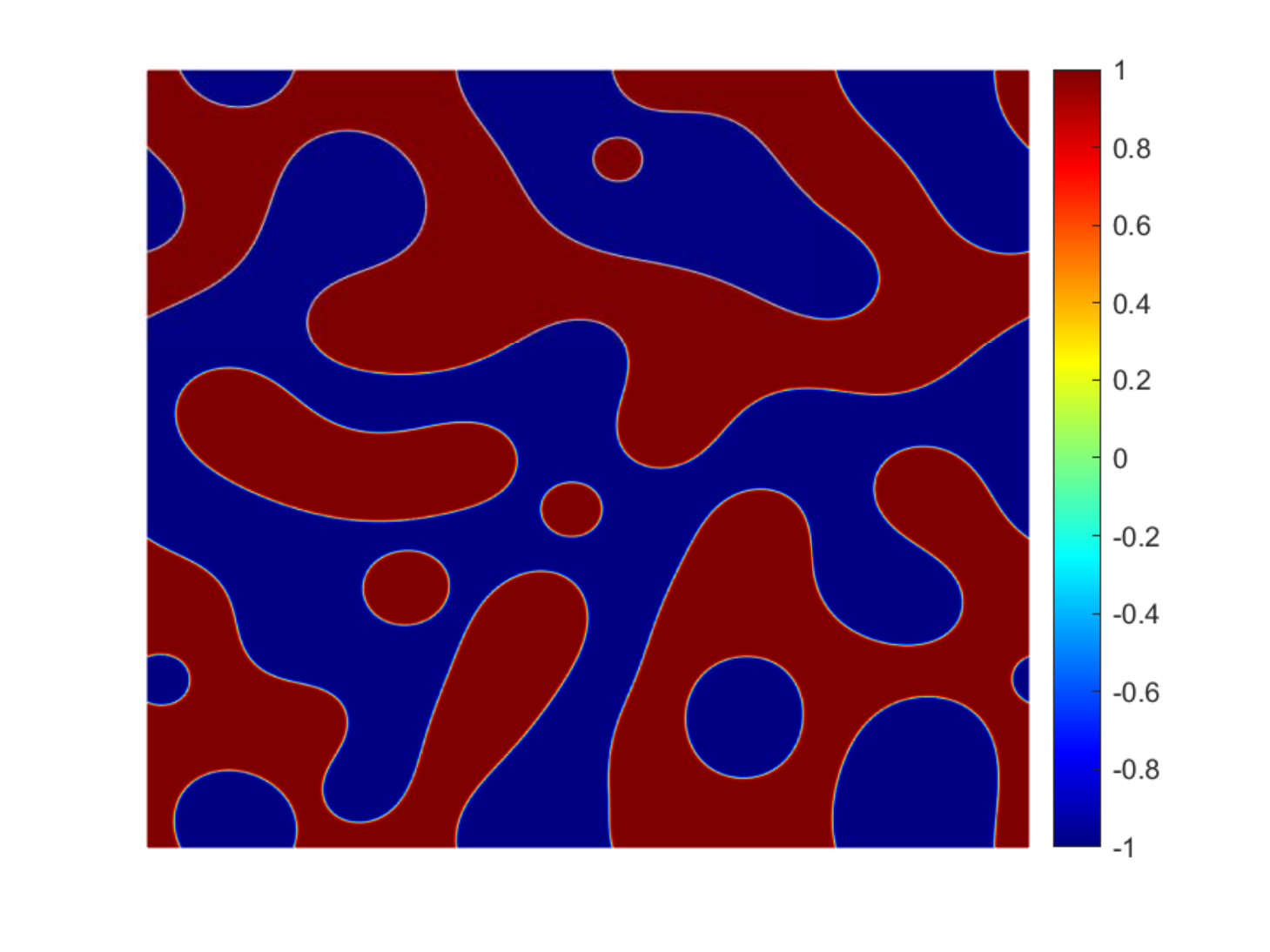}\vspace{-0.2cm}
\includegraphics[width=0.33\textwidth]{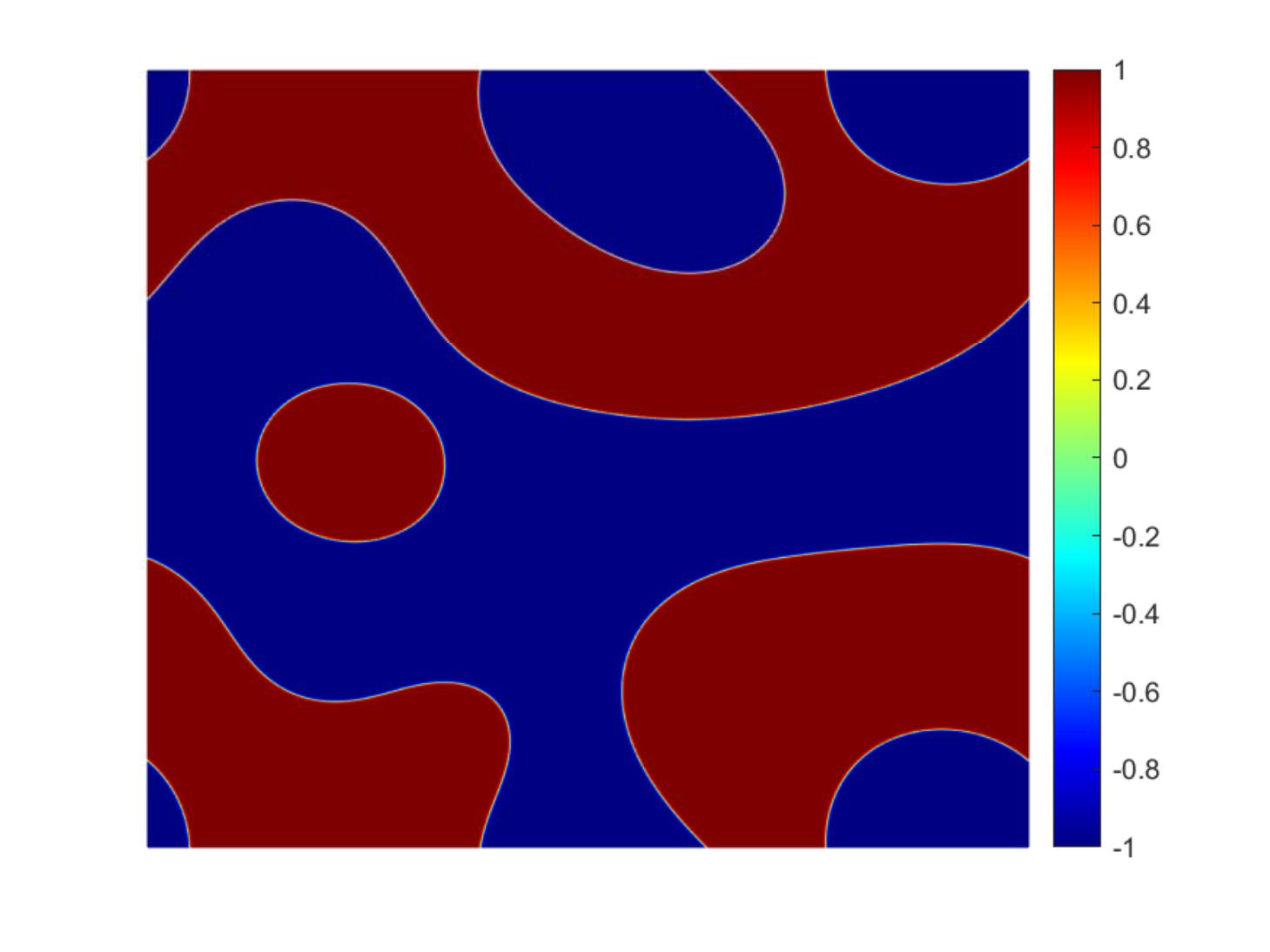}\hspace{-0.4cm}
\includegraphics[width=0.33\textwidth]{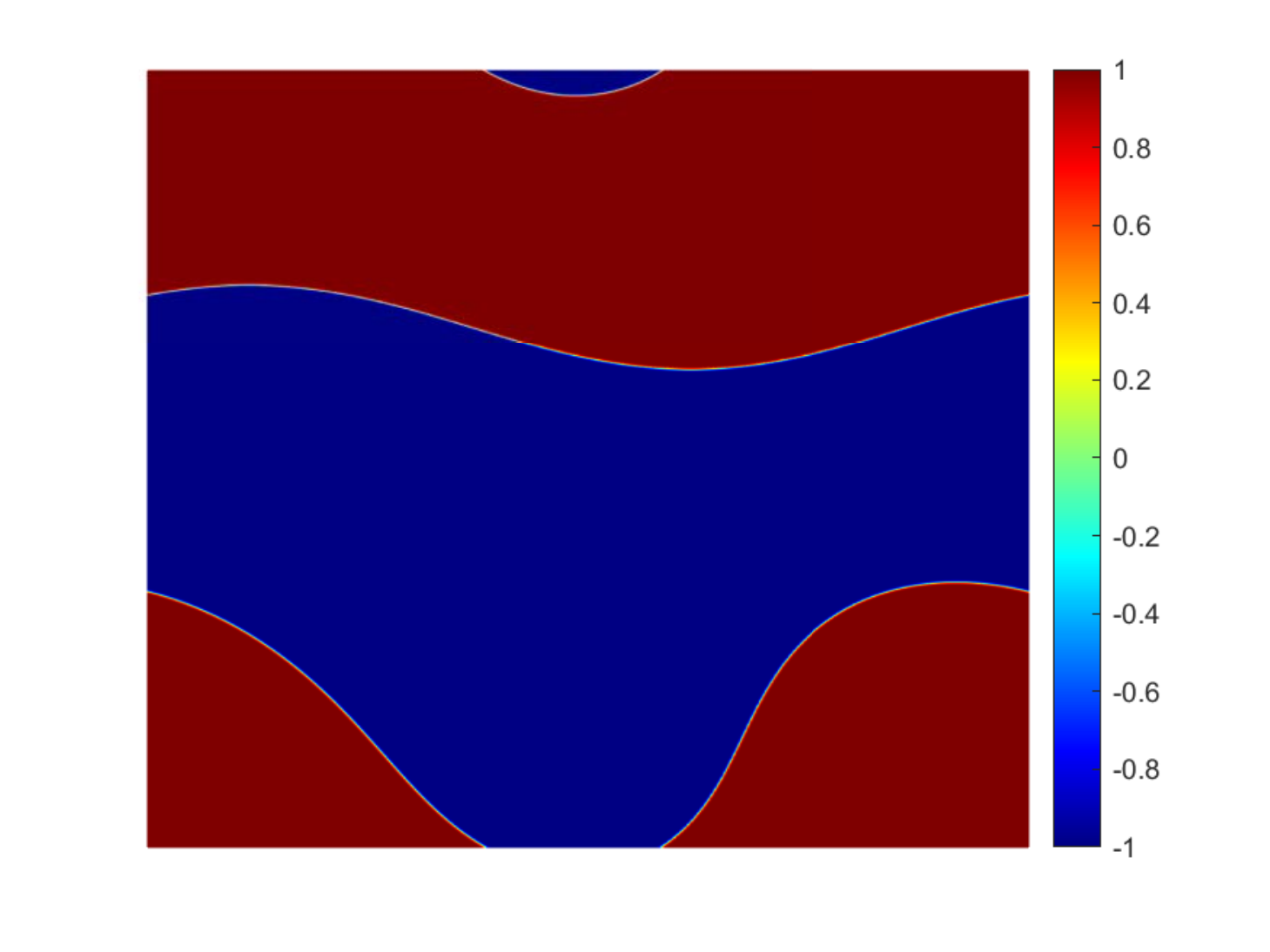}\hspace{-0.4cm}
\includegraphics[width=0.33\textwidth]{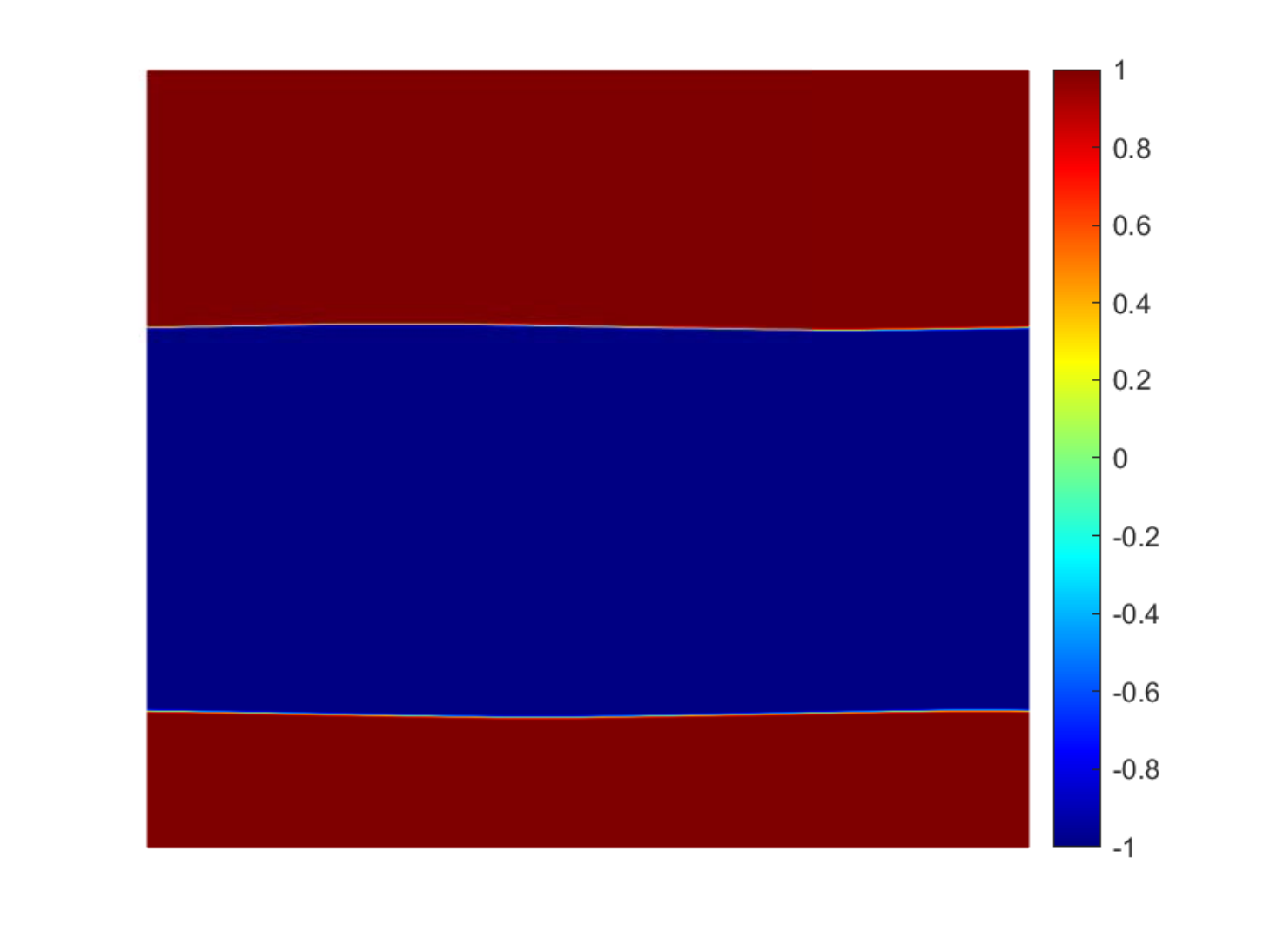}\vspace{-0.2cm}
\caption{Snapshots of the coarsening dynamics at $t=1$, $10$, $60$, $400$, $2000$, and $10000$
for the case $\delta=0.005$ and $\varepsilon=0.04$}
\label{fig_coarsening}
\end{figure}

\section{Conclusion}
\label{sect_conclusion}
	
In this work, we study a second-order stabilized linear numerical scheme for the nonlocal Cahn--Hilliard equation. A modified Crank--Nicolson and second-order explicit extrapolation are adopted for the temporal discretization. To ensure the energy stability at a theoretical level, we add two artificial stabilization terms, $A_0 \Delta_N ( \phi^{n+1} - 2 \phi^n + \phi^{n-1})$ and $A_1 \dt \Delta_N ( \phi^{n+1} - \phi^n )$, in the numerical scheme.
In particular, the optimal rate convergence analysis is accomplished by applying the higher-order consistency estimate, combined with a rough error estimate and a refined error estimate. In turn, the $\ell^\infty$ bound of the numerical solution, as well as its discrete temporal derivative, becomes an important by-product. Meanwhile, the energy stability is obtained in the sense that a modified energy decreases in time and the original energy is uniformly bounded, where the second stabilization term has played an important role. The theoretical result has greatly improved the ones reported in an existing work~\cite{du18}, in which the second-order scheme can be viewed as a special case of the proposed scheme \eqref{scheme-2nd-1} with $A_1=0$. In comparison with the second-order scheme based on the BDF2 temporal discretization in \cite{LiX21b}, the lower bounds required for $A_0$ and $A_1$ in \eqref{condition-A0-A1} are moderately smaller, which implies that the constraint for the energy stability is less restrictive than that for the BDF2 scheme. Moreover, the modified energy defined by \eqref{energy stab-0} gives an approximation of the original energy with a deviation of order $O(\dt^2)$, while an $O(\dt)$ correction term is added for the modification adopted in the BDF2 scheme~\cite{LiX21b}. In other words, the energy dissipation property (Theorem \ref{thm:energy stab}) turns out to be closer to the original physical system than the BDF2 approach.


\Acknowledgements{
This work was supported by the CAS AMSS-PolyU Joint Laboratory of Applied Mathematics.
The first author was supported by the Hong Kong Research Council General Research Fund (Grant No. 15300821)
and the Hong Kong Polytechnic University grants (Grant Nos. 1-BD8N, 4-ZZMK, and 1-ZVWW).
The second author was supported by the Hong Kong Research Council Research Fellow Scheme (Grant No. RFS2021-5S03)
and General Research Fund (Grant No. 15302919).
The third author was supported by US National Science Foundation (Grant No. DMS-2012269).}




\end{document}